\documentclass[11pt,reqno]{amsart}
\usepackage[body={7in,9in},centering]{geometry}
\usepackage{amsmath,amssymb,amsthm,mathrsfs,extarrows}
\usepackage{color,xcolor}
\definecolor{cobalt}{RGB}{61,99,181}
\usepackage[colorlinks,citecolor=cobalt,linkcolor=cobalt]{hyperref}
\usepackage{float}
\usepackage{exscale}
\usepackage{relsize}
\usepackage{graphicx}
\usepackage{tikz,tikz-cd}

\date{\today}
\newtheorem{thm}{Theorem}[section]
\newtheorem{defi}[thm]{Definition}
\newtheorem{cor}[thm]{Corollary}
\newtheorem{lem}[thm]{Lemma}
\newtheorem{rem}[thm]{Remark}
\newtheorem{ex}[thm]{Example}
\newtheorem{prop}[thm]{Proposition}

\numberwithin{equation}{section}
\makeatletter

\newcommand{\Rmnum}[1]{\expandafter\@slowromancap\romannumeral #1@}
\makeatother

\newcommand{\N}{\mathbb{N}}

\begin{document}

	\title[disjoint Ces$\grave{A}$ro-hypercyclic operators]{disjoint Ces$\grave{A}$ro-hypercyclic operators}
	
	\author[Qing Wang And Yonglu Shu]{Qing Wang And Yonglu Shu}

	\keywords{Disjoint Ces$\grave{a}$ro-hypercyclic operators; Disjoint Ces$\grave{a}$ro-Hypercyclicity Criterion; Banach spaces; Weighted shifts}
	

	\begin{abstract}
	In this paper, we investigate the properties of disjoint Ces$\grave{a}$ro-hypercyclic \mbox{operators}. First, the definition of disjoint Ces$\grave{a}$ro-hypercyclic \mbox{operators} is provided, and disjoint Ces$\grave{a}$ro-Hypercyclicity Criterion is proposed. Later, two methods are used to prove that operators satisfying this criterion possess disjoint Ces$\grave{a}$ro-hypercyclicity. Finally, this \mbox{paper} further investigates weighted shift operators and provides detailed characterizations of the weight sequences for disjoint Ces$\grave{a}$ro-hypercyclic unilateral and bilateral weighted shift operators on sequence spaces.
			\end{abstract} \maketitle
	\section{Introduction}\label{Intro}
	Let $X$ be a separable infinite dimensional complex Banach space, and $B(X)$ be the space of all bounded linear operators on $X.$ Let $\mathbb{N}$ and $\mathbb{Z}$ denote positive integers and integers, and $\mathbb{K}$ denote the real scalar field $\mathbb{R}$ and complex scalar field $\mathbb{C}$. A bounded linear operator $T:X\rightarrow X$ is called hypercyclic (supercyclic) if there is some $x\in X$ such that the orbit $Orb(T,x)=\{x,Tx,T^2x,\ldots\}$ ($Orb\{\lambda T^nx:\lambda\in\mathbb{K},n=0,1,2,\ldots\}$)is dense in $X$, and $x$ is called a hypercyclic (supercyclic) vector of $T$. The set of hypercyclic (supercyclic) vectors of $T$ is denoted by $HC(T)$ ($SC(T)$). The first example of hypercyclic operators on Banach space was provided by Rolewicz \cite{SR}, he showed that if $B$ is a unilateral backward shift on Banach space $\ell^p(\mathbb{N})\;(1\leqslant p<\infty)$, then $\lambda B$ is hypercyclic if and only if $|\lambda| >1$. Kitai \cite{KC} in 1984, Gethner and Shapiro \cite{SC} in 1987 independently gave a Hypercyclicity Criterion, promoting the study of linear operator dynamics. But not every hypercyclic operator satisfies Hypercyclicity Criterion, De La Rosa and Read \cite{DRR} constructed a Banach space that supports a hypercyclic operator which is not weakly mixing, and thus fails to satisfy the Hypercyclicity Criterion. More about hypercyclic operators and linear operator dynamics can be seen in \cite{GEM,BF}. 
	
	On the basis of the study of hypercyclic operators, Bernal-González \cite{BGL}, Bès and Peris \cite{BP} in 2007 independently gave the concept of disjoint hypercyclic operators. For $(N\geqslant 2)$ hypercyclic operators $T_1,\ldots,T_N$ acting on a separable infinite dimensional Banach space $X$ are disjoint hypercyclic (in short, d-hypercyclic) if there is some vector $x\in X$ such that the orbit
	$$\{(x,\ldots,x),(T_1x,\ldots,T_Nx),(T_1^{2}x,\ldots,T_N^2x),\ldots,(T_1^n{x},\ldots,T_N^nx),\ldots\}$$
	is dense in $X^N$, and $x$ is called a d-hypercyclic vector of $T_1,\ldots,T_N$, the set of d-hypercyclic vectors is denoted by $d-HC(T_1,\ldots,T_N)$. If the set of d-hypercyclic vectors of $T_1,\ldots,T_N$ is a dense set in $X$, then $T_1,\ldots,T_N$ are densely d-hypercyclic.  Shkarin \cite{SS} gave a proof of existence of disjoint hypercyclic operators. Bès and Peris \cite{BP} gave a criterion to prove d-hypercyclicity, called d-Hypercyclicity Criterion, which expands the Hypercyclicity Criterion. 
	
	Let $M_n(T)$ denote the arithmetic mean of the powers of $T\in B(X)$, that is 
	$$M_n(T)=\frac{1}{n}(I+T+T^2+\ldots+T^{n-1}),n\in \mathbb{N}.$$
	If the arithmetic mean of the orbit of $x$ is dense in $X$, then $T$ is said to be Ces$\grave{a}$ro-hypercyclic. León-Saavedra \cite{LS} proved that an operator is Ces$\grave{a}$ro-hypercyclic if and only if there exists a vector $x\in X$ such that the orbit $\{n^{-1}T^nx\}_{n\geqslant 1}$ is dense in $X$. El Berrag and Tajmouati \cite{ET} proposed a Ces$\grave{a}$ro-Hypercyclicity Criterion on Hilbert space, which has important applications in proving the Ces$\grave{a}$ro-hypercyclicity of operators. In this paper when we refer to the Ces$\grave{a}$ro orbit of a vector $x$ under an operator $T$, we mean the sequence $\{n^{-1}T^nx:n=1,2,\ldots\}.$  
	
	Weighted shifts are one of the important research subjects in the dynamics of linear operators. León-Saavedra \cite{LS} showed that every Ces$\grave{a}$ro-hypercyclic unilateral weighted shift is hypercyclic on $\ell^2(\mathbb{N})$, while for bilateral weighted shifts on $\ell^2(\mathbb{Z})$ are not the case: there are operators that are not hypercyclic, but their Ces$\grave{a}$ro means are hypercyclic. Tajmouati and El \cite{TAEB} gave an example, which is hypercyclic and supercyclic, but it is not Ces$\grave{a}$ro-hypercyclic. 
	
			
	\begin{ex}\cite{LS}
		Let $B\in \ell^2(\mathbb{Z})$ be the bilateral backward shift with the weight sequence,
		\begin{equation*}
			w_n=
			\begin{cases}
				1\quad&\text{if}\;n\leqslant 0,\\
				2\quad&\text{if}\;n\geqslant 1.
			\end{cases}
		\end{equation*}
	Then $B$ is Ces$\grave{a}$ro-hypercyclic, but it is not hypercyclic.
		
	\end{ex}
	\begin{ex}\cite{TAEB}
		Let $B\in \ell^2(\mathbb{Z})$  be the bilateral backward shift with the weight sequence,
		\begin{equation*}
			w_n=
			\begin{cases}
				2\quad&\text{if}\;n< 0,\\
				\frac{1}{2}\quad&\text{if}\;n\geqslant 0.
			\end{cases}
		\end{equation*}
		Then $B$ is hypercyclic and supercyclic, but it is not Ces$\grave{a}$ro-hypercyclic.
		
	\end{ex}
	\begin{ex} \cite{TAEB}
		Let $B\in \ell^2(\mathbb{Z})$ be the bilateral backward shift with the weight sequence,
			\begin{equation*}
			w_n=
			\begin{cases}
				\frac{1}{2}\quad&\text{if}\;n< 0,\\
				n+1\quad&\text{if}\;n\geqslant 0.
			\end{cases}
		\end{equation*}
		Then $B$ is Ces$\grave{a}$ro-hypercyclic, but it is not hypercyclic and supercyclic.
		\end{ex}
	 From the hypercyclicity and disjoint hypercyclicity, we generalize Ces$\grave{a}$ro-hypercyclic to disjoint Ces$\grave{a}$ro-hypercyclic. In this process of generalization, we study the dynamical properties of disjoint Ces$\grave{a}$ro-hypercyclic operators.

	In the second part, we introduce the basic definitions, including disjoint Ces$\grave{a}$ro-hypercyclic, disjoint Ces$\grave{a}$ro-topologically transitive, and disjoint Ces$\grave{a}$ro-mixing.
	
	In the third part, we discuss the properties of disjointness in Ces$\grave{a}$ro-hypercyclic operators. Meanwhile, we give disjoint Ces$\grave{a}$ro-Hypercyclicity Criterion and use two methods to prove that the Ces$\grave{a}$ro-hypercyclic operators satisfying this criterion are disjoint Ces$\grave{a}$ro-hypercyclic. On the one hand, we construct a disjoint Ces$\grave{a}$ro-hypercyclic vector by disjoint Ces$\grave{a}$ro-Hypercyclicity Criterion. On the other hand, we prove that operators satisfying disjoint Ces$\grave{a}$ro-Hypercyclicity Criterion are disjoint Ces$\grave{a}$ro-Blow-Up/Collapse, then they are disjoint Ces$\grave{a}$ro-hypercyclic.
	
	In the last part, as an application of the results from the third part, we characterize the weight sequences of disjoint Ces$\grave{a}$ro-hypercyclic weighted shifts.
	
	\section{disjoint ces$\grave{A}$ro-hypercyclic}\label{S2}
	\begin{defi}\cite{LS}
		We say that an operator $T$ acting on a separable infinite dimensional complex Banach space $X$ is Ces$\grave{a}$ro-hypercyclic if there exists a vector $x\in X$ such that the orbit $$\Big\{x,Tx,\frac{T^2x}{2},\frac{T^3x}{3},\ldots,\frac{T^nx}{n},\ldots\Big\}$$
		is dense in $X$, such $x$ is called a Ces$\grave{a}$ro-hypercyclic vector of $T$, the set of Ces$\grave{a}$ro-hypercyclic vectors of $T$ is denoted by $CH(T)$.
		
	\end{defi}
	

	\begin{defi}\label{Toeplitz}
		We say that $(N\geqslant 2)$ Ces$\grave{a}$ro-hypercyclic operators $T_1,\ldots,T_N$ acting on a separable infinite dimensional complex Banach space $X$ are disjoint Ces$\grave{a}$ro-hypercyclic (in short, d-Ces$\grave{a}$ro-hypercyclic ) if there is some vector $x\in X$ such that the orbit\\
		$$\Big\{(x,\ldots,x),(T_1x,\ldots,T_Nx),(\frac{T_1^2x}{2},\ldots,\frac{T_N^2x}{2}),\ldots,(\frac{T_1^nx}{n},\ldots,\frac{T_N^nx}{n}),\ldots\Big\}$$
		is dense in $X^N$.
		
		In particular, we say that $x$ is a d-Ces$\grave{a}$ro-hypercyclic vector of $T_1,T_2,\ldots,T_N$. The set of d-Ces$\grave{a}$ro-hypercyclic vectors of $T_1,T_2,\ldots,T_N$ is denoted by $d-CH(T_1,T_2,\ldots,T_N)$. If the set of d-Ces$\grave{a}$ro-hypercyclic vectors of $T_1,\ldots,T_N$ is a dense set in $X$, then $T_1,\ldots,T_N$ are densely d-Ces$\grave{a}$ro-hypercyclic. 
		
	\end{defi}
	
	Accordingly, we give the concepts of disjoint Ces$\grave{a}$ro-topologically transitive and disjoint Ces$\grave{a}$ro-mixing.
	
	\begin{defi}\label{inv}
		We say that $T_1,T_2,\ldots,T_N\in B(X)\,(N\geqslant 2)$ are disjoint Ces$\grave{a}$ro-topologically transitive if for every N+1 non-empty open subsets $U,V_1,\ldots,V_N$ of $X$, there exists $n\geqslant 1$ such that $U\cap(nT_1^{-n})(V_1)\cap\cdots\cap(nT_N^{-n})(V_N)\neq\varnothing.$
	\end{defi}
	\begin{defi}\label{irr}
		We say that $T_1,T_2,\ldots,T_N\in B(X)\,(N\geqslant 2)$ are disjoint Ces$\grave{a}$ro-mixing if for every N+1 non-empty open subsets $U,V_1,\ldots,V_N$ of $X$, there exists $m\geqslant 1$ such that $U\cap(nT_1^{-n})(V_1)\cap\cdots\cap(nT_N^{-n})(V_N)\neq\varnothing$ for each $n\geqslant m$.
	\end{defi}
	
	For convenience, we use d-Ces$\grave{a}$ro-hypercyclic instead of disjoint Ces$\grave{a}$ro-hypercyclic.
	%
		\section{A criterion for d-ces$\grave{A}$ro-hypercyclic}\label{s3}
	Based on the articles by Bès, Peris \cite{BP} and El Berrag \cite{EM}, we have:
	\begin{prop}\label{6.2}
		Let $T_1,T_2,\ldots,T_N\in B(X)\;(N\geqslant 2)$.
		The following are equivalent:\\
		(1) The set of $d-CH(T_1,T_2,\ldots,T_N)$ is a dense $G_\delta$ set.\\
		(2) $T_1,T_2,\ldots,T_N$ are d-Ces$\grave{a}$ro-topologically transitive.\\
		(3) For each $x,y_1,\ldots,y_N\in X$, there exist sequence $(x_k)\subseteq X$ and $(n_k)\subseteq \mathbb{N}$ such that
		$$x_k\rightarrow  x\ \text{and}\
		\frac{1}{n_k}T_l^{n_k}{x_k}\rightarrow y_l \ \text{as}\	k\rightarrow\infty.$$
		(4) For each $x,y_1,\ldots,y_N\in X$, and each neighbourhood $W$ of the zero in $X$, there exist $z\in X$ and $n\geqslant1$ such that
		$$z-x\in W\ \text{and}\
		\frac{1}{n}T_l^{n}{z}-y_l\in W\,(1\leqslant l\leqslant N).$$
	\end{prop}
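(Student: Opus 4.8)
The plan is to treat $(2)$, $(3)$ and $(4)$ as three equivalent reformulations of a single Birkhoff-type transitivity property and to reserve the Baire category theorem for the one substantive implication $(2)\Rightarrow(1)$; the reverse implication $(1)\Rightarrow(2)$ is then immediate. For the latter, observe that a dense $G_\delta$ set is in particular dense and nonempty, so given nonempty open sets $U,V_1,\ldots,V_N$ one may pick $x_0\in U\cap d\text{-}CH(T_1,\ldots,T_N)$; since the Ces\`aro orbit of $x_0$ is dense in $X^N$, there is some $n\geqslant 1$ with $\tfrac1n T_l^{n}x_0\in V_l$ for every $l$, and then $x_0$ itself witnesses $U\cap(nT_1^{-n})(V_1)\cap\cdots\cap(nT_N^{-n})(V_N)\neq\varnothing$.

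For the core implication $(2)\Rightarrow(1)$, I would introduce for each $n\geqslant 1$ the continuous map $\Phi_n\colon X\to X^N$ defined by $\Phi_n(x)=\big(\tfrac1n T_1^{n}x,\ldots,\tfrac1n T_N^{n}x\big)$, so that $d\text{-}CH(T_1,\ldots,T_N)$ is exactly the set of $x$ for which $\{\Phi_n(x):n\geqslant 1\}$ is dense in $X^N$. Fixing a countable base $\{W_j\}_{j\in\mathbb{N}}$ of nonempty open sets of $X^N$ — available because $X$, hence $X^N$, is separable — one writes
$$d\text{-}CH(T_1,\ldots,T_N)=\bigcap_{j}\ \bigcup_{n\geqslant 1}\Phi_n^{-1}(W_j).$$
Each $\Phi_n^{-1}(W_j)$ is open by continuity of $\Phi_n$, which exhibits the set as a $G_\delta$. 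To see that each inner union $\bigcup_{n}\Phi_n^{-1}(W_j)$ is dense, it suffices to test against products $W_j=V_1\times\cdots\times V_N$ of basic open sets: given any nonempty open $U$, the disjoint transitivity hypothesis produces $n\geqslant 1$ with $U\cap\Phi_n^{-1}(W_j)=U\cap(nT_1^{-n})(V_1)\cap\cdots\cap(nT_N^{-n})(V_N)\neq\varnothing$. As $X$ is a complete metric space, the Baire category theorem then yields that the countable intersection is a dense $G_\delta$, giving $(1)$.

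It remains to verify $(2)\Leftrightarrow(4)\Leftrightarrow(3)$, all of which are routine unpackings. For $(2)\Leftrightarrow(4)$, one passes between ``open sets'' and ``point plus neighbourhood'': taking $U=x+W$ and $V_l=y_l+W$ turns the nonempty intersection in Definition \ref{inv} into the two membership conditions $z-x\in W$ and $\tfrac1n T_l^{n}z-y_l\in W$, and conversely, given open $U,V_1,\ldots,V_N$ one selects $x\in U$, $y_l\in V_l$ and a balanced zero-neighbourhood $W$ small enough that $x+W\subseteq U$ and $y_l+W\subseteq V_l$. For $(4)\Leftrightarrow(3)$, the sequential statement $(3)$ follows from $(4)$ by applying it to a shrinking neighbourhood base $(W_k)$ of $0$ and collecting the resulting points $z_k=:x_k$ and indices $n_k$, while $(3)\Rightarrow(4)$ is immediate by retaining a single large index $k$. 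The only genuinely delicate point is the Baire step: one must be careful that the representation of $d\text{-}CH$ as a countable intersection of open sets is correct and that density of the inner unions reduces cleanly to product base elements, which is exactly where the disjoint transitivity hypothesis is consumed.
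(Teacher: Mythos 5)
Your proposal is correct and takes essentially the same route as the paper: the core $(2)\Rightarrow(1)$ step is the identical Baire category argument over a countable base of product open sets (the paper writes your $\Phi_n^{-1}(W_j)$ explicitly as $(nT_1^{-n})A_{j_1}\cap\cdots\cap(nT_N^{-n})A_{j_N}$), and $(1)\Rightarrow(2)$ is the same orbit-density observation. The remaining equivalences are the same elementary unpackings, differing only cosmetically in that you route $(2)\Rightarrow(4)\Rightarrow(3)$ while the paper proves $(2)\Rightarrow(3)\Rightarrow(4)\Rightarrow(2)$.
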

	
	\begin{proof}
		(1)$\Rightarrow$(2). Since the set of
		$d-CH(T_1,T_2,\ldots,T_N)$ is a dense $G_\delta$ set, for any $N+1$ nonempty open subsets $U,V_1,\ldots,V_N$ of $X$, we can find $x_0\in U\cap d-CH(T_1,T_2,\ldots,T_N)$ and $m\in \mathbb{N} $ such that $\frac{1}{m}T_i^m(x_0)\in V_i\;(1\leqslant i\leqslant N)$, then $U\cap (mT_1^{-m})V_1\cap\cdots \cap (mT_N^{-m})V_N\neq \varnothing$, $T_1,T_2,\ldots,T_N$ are d-Ces$\grave{a}$ro-topologically transitive.

		(2)$\Rightarrow$(1). Let$\{A_j,j\in \mathbb{N}\}$ be a basis for the topology of $X$. We have $x\in d-CH(T_1,T_2,\ldots,T_N)$ if and only if the orbit $\big\{(\frac{T_1^nx}{n},\ldots,\frac{T_N^nx}{n}):n\geqslant 1\big\}$ is dense in $X^N$ if and only if for each $J=(j_1,\ldots,j_N)\in \mathbb{N}^N$, there exists $n\geqslant 1$ such that $\frac{1}{n}T_i^nx\in A_{j_i}\;(1\leqslant i\leqslant N)$ if and only if $x\in\cap_{J\in\mathbb{N}^N}\cup_{n\geqslant 1}\big((nT_1^{-n})A_{j_1}\cap\cdots\cap(nT_N^{-n})A_{j_N}\big).$ By (2), for every non-empty open subset $U$ of $X$ and for all $J\in\mathbb{N}^N$, there exists $n\geqslant 1$ such that $U\cap (nT_1^{-n})A_{j_1}\cap\cdots\cap(nT_N^{-n})A_{j_N}$ is non-empty and open. The set of 
		$$\cup_{n\geqslant 1}\left((nT_1^{-n})A_{j_1}\cap\cdots\cap(nT_N^{-n})A_{j_N} \right)$$ is denoted by $C_J$ and $C_J$
		is non-empty and open. Furthermore, $U\cap C_J\neq\varnothing$ for all $J\in\mathbb{N}^N$. Thus $C_J$ is dense in $X$ and by the Baire category theorem, the set of $d-CH(T_1,T_2,\ldots,T_N)$ is $\cap_{J\in\mathbb{N}^N}\cup_{n\geqslant 1}\left((nT_1^{-n})A_{j_1}\cap\cdots\cap(nT_N^{-n})A_{j_N} \right)$, which is a dense $G_\delta$ set.
		 
		(2)$\Rightarrow$(3). Let $x,y_1,\ldots,y_N\in X$. For all $k\geqslant 1$, $B(x,\frac{1}{k})$ and $B(y_i,\frac{1}{k})\;(1\leqslant i\leqslant N)$ are non-empty open subsets. Since $T_1,T_2,\ldots,T_N$ are d-Ces$\grave{a}$ro-topologically transitive, then there exist $(n_k)\in \mathbb{N}$ and $(x_k)\in X$ such that $x_k\in B(x,\frac{1}{k})$ and $\frac{1}{n_k}{T_l^{n_k}x_k}\in B(y_l,\frac{1}{k})$ for all $k\geqslant 1$. Then we have $||x_k-x||<\frac{1}{k}$ and $||\frac{1}{n_k}{T_l^{n_k}x_k}-y_l||<\frac{1}{k}$ for all $k\geqslant 1$.
		
		(3)$\Rightarrow$(4). Obviuously.
		
		(4)$\Rightarrow$(2). Let $ U,V_1,\ldots,V_N $ be nonempty open subsets of $X$. Pick $x\in U$, $y_i\in V_i$ and a neighbourhood $W$ of zero such that $x+W\in U$ and $y_i+W\in V_i\;(1\leqslant i\leqslant N)$, by (4) there exist $z\in X$ and $n\geqslant1$ such that $z-x\in W$ and $\frac{1}{n}{T_i^n}z-y_i\in W$, then $z\in U$ and $\frac{1}{n}T_i^nz\in V_i\;(1\leqslant i\leqslant N)$, i.e., $U\cap (nT_1^{-n})V_1\cap\cdots \cap (nT_N^{-n})V_N\neq \varnothing$.
	\end{proof}
	Godefroy and Shapiro \cite{GS} gave a sufficient condition for hypercyclicity, which more vividly depicts the definition of transitivity. Bès and Peris \cite{BP} extended this condition to d-hypercyclicity. Inspired by these conclusions, we extend this condition to d-Ces$\grave{a}$ro-hypercyclicity.
	\begin{defi}\label{6.9}
		(d-Ces$\grave{a}$ro-Blow-Up/Collapse) Let $T_1,T_2,\ldots,T_N\in B(X)\,(N\geqslant 2)$, suppose that for each open neighbourhood $W$ of zero of $X$ and non-empty open subsets $V_0 ,V_1,\ldots,V_N$ of $X$, there exists $m\in\N$ such that
		$$W\cap(mT_1^{-m})(V_1)\cap\cdots\cap(mT_N^{-m})(V_N)\neq\varnothing,$$
		$$V_0\cap (mT_1^{-m})(W)\cap \cdots \cap(mT_N^{-m})(W)\neq \varnothing.$$
		Then $T_1,T_2,\ldots,T_N$ are d-Ces$\grave{a}$ro-Blow-Up/Collapse.
		
	\end{defi}
	
	\begin{prop} \label{pr3.3}
		If $T_1,T_2,\ldots,T_N$ are d-Ces$\grave{a}$ro-Blow-Up/Collapse, then they are d-Ces$\grave{a}$ro-topologically transitive.
	\end{prop}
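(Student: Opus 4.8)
The plan is to read off d-Ces$\grave{a}$ro-topological transitivity directly from the two simultaneous conditions in the definition of d-Ces$\grave{a}$ro-Blow-Up/Collapse, exploiting crucially that both conditions hold for the \emph{same} index $m$ and that each $T_i$ is linear. Given nonempty open sets $U,V_1,\dots,V_N$ (the data for transitivity), the idea is to manufacture a single vector $z\in U$ whose Ces$\grave{a}$ro iterate $\frac1m T_i^m z$ lands in $V_i$ for every $i$, by writing $z=a+b$, where $a$ is a small ``blow-up'' vector supplied by the first condition and $b\in U$ is a ``collapse'' vector supplied by the second condition.

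Concretely, I would first fix $x_0\in U$ and $y_i\in V_i$, then choose $\varepsilon>0$ so small that $B(x_0,\varepsilon)\subseteq U$ and $B(y_i,\varepsilon)\subseteq V_i$ for all $i$, and set $W:=B(0,\varepsilon/2)$, a neighbourhood of zero. Applying the hypothesis to the neighbourhood $W$ and to the nonempty open sets $V_0:=B(x_0,\varepsilon/2)$ and $V_i':=B(y_i,\varepsilon/2)$ produces a single $m\in\mathbb{N}$ together with a vector $a\in W$ satisfying $\frac1m T_i^m a\in V_i'$ for all $i$ (coming from $W\cap(mT_1^{-m})(V_1')\cap\cdots\cap(mT_N^{-m})(V_N')\neq\varnothing$) and a vector $b\in V_0$ satisfying $\frac1m T_i^m b\in W$ for all $i$ (coming from $V_0\cap(mT_1^{-m})(W)\cap\cdots\cap(mT_N^{-m})(W)\neq\varnothing$).

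Now set $z:=a+b$. Since $a\in B(0,\varepsilon/2)$ and $b\in B(x_0,\varepsilon/2)$, the triangle inequality gives $z\in B(x_0,\varepsilon)\subseteq U$. By linearity, $\frac1m T_i^m z=\frac1m T_i^m a+\frac1m T_i^m b$; as $\frac1m T_i^m a\in B(y_i,\varepsilon/2)$ while $\frac1m T_i^m b\in B(0,\varepsilon/2)$, a second application of the triangle inequality yields $\frac1m T_i^m z\in B(y_i,\varepsilon)\subseteq V_i$ for every $i$. Hence $z\in U\cap(mT_1^{-m})(V_1)\cap\cdots\cap(mT_N^{-m})(V_N)$, so this intersection is nonempty and $T_1,\dots,T_N$ are d-Ces$\grave{a}$ro-topologically transitive.

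The only genuine subtlety, and the point I would flag as the crux, is that the two displayed conditions in the Blow-Up/Collapse definition are witnessed by \emph{one common} $m$; it is precisely this shared index that lets $a$ and $b$ be added and their Ces$\grave{a}$ro iterates be summed coherently. Everything else is bookkeeping: the nested radii $\varepsilon$ and $\varepsilon/2$ are arranged so that the sum of two radius-$\varepsilon/2$ contributions stays inside the original $\varepsilon$-neighbourhoods of $x_0$ and each $y_i$, and linearity of the $T_i$ together with the scaling by $1/m$ is what allows the blow-up and collapse contributions to separate cleanly.
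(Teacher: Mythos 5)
Your proof is correct and is essentially the paper's own argument: both decompose the witness as a sum $z=\eta_0+w$ of a ``collapse'' vector near the source set and a ``blow-up'' vector near zero, relying on the shared index $m$ and linearity of the $T_i$. The only cosmetic difference is that the paper works with abstract sets $U_i$ and a $0$-neighbourhood $W$ satisfying $U_i+W\subset V_i$, whereas you instantiate these as the concrete balls $B(x_0,\varepsilon/2)$, $B(y_i,\varepsilon/2)$, and $B(0,\varepsilon/2)$.
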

	\begin{proof}
		Let $ V_0,V_1,\ldots,V_N $ be non-empty open subsets of $X$, there are non-empty open subsets $ U_0,U_1,\ldots,U_N $ and a 0-neighbourhood $W$ such that $U_i+W\subset V_i\;(0\leqslant i\leqslant N)$. From the hypothesis, we have $m\in\mathbb{N}$, $\eta_0\in U_0$ and $w\in W$ such that $\frac {1}{m}T_l^m(\eta_0)\in W$ and $\frac {1}{m}T_l^m(w)\in U_l\;(1\leqslant l\leqslant N)$. We can know that $\eta_0+w\in U_0+W\subset V_0$ and $\frac{1}{m}T_l^m(\eta_0+w)=\frac{1}{m}T_l^m(\eta_0)+\frac{1}{m}T_l^m(w)\in W+U_l\subset V_l$, i.e., $V_0\cap mT_1^{-m}(V_1)\cap\cdots \cap mT_N^{-m}(V_N)\neq \varnothing.$ 
		Hence $T_1,\ldots,T_N$ are d-Ces$\grave{a}$ro-topologically transitive.
	\end{proof}
	Let's introduce an important criterion that will have a lot of application in our later content.
	\begin{defi}\label{7.11}
		(d-Ces$\grave{a}$ro-Hypercyclicity \mbox{Criterion}) We say that operators $T_1,\ldots,T_N\in B(X)(N\geqslant 2)$ satisfy the disjoint Ces$\grave{a}$ro-Hypercyclicity Criterion (in short, d-Ces$\grave{a}$ro-Hypercyclicity \mbox{Criterion}) with respect to an increasing sequence $(n_k)$ of positive intergers provided there exist dense subsets $X_0,X_1,\ldots,X_N$ of $X$ and mappings $S_{l,k} :X_l\rightarrow X(1\leqslant l\leqslant N,k\in{\mathbb{N}})$ satisfying
		
		(1)  $\frac{1}{n_k}T_l^{n_k}\rightarrow0\;\text{pointwise on}\; X_0\;\text{as}\;k\rightarrow\infty,$

		(2)
		$S_{l,k}\rightarrow 0 \;\text{pointwise on}\, X_l\;\text{as}\;k\rightarrow\infty, $

		(3)  $(\frac{1}{n_k}T_l^{n_k}S_{i,k}-\delta_{i,l}Id_{X_i})\rightarrow0 \;\text{pointwise on}\; X_i\;\text{as}\;k\rightarrow\infty.$
		
		
	\end{defi}\label{7.11.1}
	\begin{thm}\label{dchc}
	If operators $T_1,\ldots,T_N$ satisfy the d-Ces$\grave{a}$ro-Hypercyclicity Criterion, then $T_1,\ldots,T_N$ are d-Ces$\grave{a}$ro-hypercyclic.
	\end{thm}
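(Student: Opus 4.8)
The plan is to avoid building a disjoint Ces\`aro-hypercyclic vector by hand and instead reduce the statement to a purely topological condition that has already been shown equivalent to having a dense $G_\delta$ set of such vectors. Concretely, I would prove that the three conditions of the criterion (Definition \ref{7.11}) force $T_1,\ldots,T_N$ to be d-Ces\`aro-topologically transitive in the sense of Definition \ref{inv}; then Proposition \ref{6.2}, implication (2)$\Rightarrow$(1), immediately gives that $d\text{-}CH(T_1,\ldots,T_N)$ is a dense $G_\delta$ set, in particular nonempty, which is exactly d-Ces\`aro-hypercyclicity. (Projecting the resulting dense orbit onto each coordinate also shows each $T_l$ is individually Ces\`aro-hypercyclic, so the hypothesis tacitly built into Definition \ref{Toeplitz} comes for free.)

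The core of the argument is a single approximation. Fix nonempty open sets $U,V_1,\ldots,V_N \subseteq X$. Using density of the sets $X_0,X_1,\ldots,X_N$, choose $u\in U\cap X_0$ and $v_l\in V_l\cap X_l$ for $1\le l\le N$, and set
\[
z_k \;=\; u+\sum_{l=1}^{N}S_{l,k}v_l .
\]
Condition (2) gives $S_{l,k}v_l\to 0$, so $z_k\to u\in U$ and hence $z_k\in U$ for all large $k$. Applying the fixed bounded operator $\tfrac{1}{n_k}T_i^{n_k}$ and expanding,
\[
\frac{1}{n_k}T_i^{n_k}z_k \;=\; \frac{1}{n_k}T_i^{n_k}u \;+\; \sum_{l=1}^{N}\frac{1}{n_k}T_i^{n_k}S_{l,k}v_l .
\]
Condition (1) kills the first summand, and condition (3) sends each remaining summand to $\delta_{i,l}v_l$; the Kronecker delta is precisely the disjointness mechanism, as it discards every off-diagonal cross term and leaves only $\tfrac{1}{n_k}T_i^{n_k}z_k\to v_i\in V_i$. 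Thus for all large $k$ the point $z_k$ lies in $U\cap(n_kT_1^{-n_k})(V_1)\cap\cdots\cap(n_kT_N^{-n_k})(V_N)$, establishing transitivity with $n=n_k$.

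A second, essentially equivalent, packaging would verify the d-Ces\`aro-Blow-Up/Collapse property of Definition \ref{6.9} and then invoke Proposition \ref{pr3.3}: the ``collapse'' half $V_0\cap\bigcap_l(mT_l^{-m})(W)\ne\varnothing$ is witnessed by a point of $V_0\cap X_0$ via condition (1), while the ``blow-up'' half $W\cap\bigcap_l(mT_l^{-m})(V_l)\ne\varnothing$ is witnessed by $\sum_l S_{l,k}v_l$ via conditions (2) and (3), with a common $m=n_k$ chosen large enough for both. I expect no genuine obstacle along either of these routes; the only care needed is that $\tfrac{1}{n_k}T_i^{n_k}$ is applied before passing to the limit, so that continuity and linearity let conditions (1) and (3) act termwise.

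The place where difficulty would actually appear is the alternative \emph{direct} construction of a single vector $x=\sum_{j}\sum_{l}S_{l,k_j}y_l^{(j)}$ whose Ces\`aro orbit runs through a dense sequence $(y_1^{(j)},\ldots,y_N^{(j)})$ of targets. There the blocks interact: at time $n_{k_j}$ the future blocks ($j'>j$) are harmless once $\|S_{l,k_{j'}}y_l^{(j')}\|$ is made tiny by condition (2), but the past blocks ($j'<j$) require $\tfrac{1}{n_{k_j}}T_i^{n_{k_j}}$ to be small on the fixed vectors $S_{l,k_{j'}}y_l^{(j')}$, which need not lie in $X_0$, so condition (1) does not apply directly and one must choose $(n_{k_j})$ increasing fast enough to suppress these terms. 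This block-interaction control is the technical heart of the hands-on method, and it is exactly what the transitivity reduction above sidesteps, since transitivity only asks for one good time per target rather than one vector good for all times.
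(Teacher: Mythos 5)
Your proposal is correct, but it takes a genuinely different route from the paper's proof of this theorem. The paper proves Theorem \ref{dchc} by precisely the direct construction you flag at the end as the difficult alternative: it enumerates a countable dense subset $\{(y_{1j},\ldots,y_{Nj}):j\in\mathbb{N}\}$ of $X^N$, builds $x=\sum_{j}\bigl(x_j+\sum_{i=1}^{N}S_{i,k_j}y_{ij}\bigr)$ by inductively choosing times $k_j$ and auxiliary vectors $x_j$ subject to seven smallness estimates, and resolves the block-interaction problem you identify in just the way you anticipate: the correction vector $x_j$ is chosen, using density of $X_0$ and continuity of the operators, so that the whole partial sum $\sum_{l=1}^{j-1}\bigl(x_l+\sum_{i}S_{i,k_l}y_{il}\bigr)+x_j$ lies in $X_0$, whence condition (1) controls the entire past block at time $n_{k_j}$, while future blocks are suppressed at all earlier times by condition (2) together with continuity of the $T_m$. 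Your main route --- setting $z_k=u+\sum_{l}S_{l,k}v_l$ with $u\in U\cap X_0$ and $v_l\in V_l\cap X_l$, so that conditions (1)--(3) give $z_k\to u$ and $\tfrac{1}{n_k}T_i^{n_k}z_k\to v_i$ --- establishes d-Ces$\grave{a}$ro-topological transitivity in a few lines, after which Proposition \ref{6.2}, (2)$\Rightarrow$(1), finishes; this is sound, and it is in fact more direct than even the paper's own second method (the Remark following Theorem \ref{th3.8}), which reaches transitivity only indirectly through Lemma \ref{thm3.9} and the d-Ces$\grave{a}$ro-Blow-Up/Collapse property. What your approach buys is brevity and the stronger conclusion that $d-CH(T_1,\ldots,T_N)$ is a dense $G_\delta$ set; what the paper's construction buys is an explicit d-Ces$\grave{a}$ro-hypercyclic vector obtained without routing through the Baire category machinery of Proposition \ref{6.2} (completeness is still used, but only for convergence of the series). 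Your closing observation --- that density of the joint orbit projects to density of each coordinate orbit, so the individual Ces$\grave{a}$ro-hypercyclicity presupposed in Definition \ref{Toeplitz} comes for free --- is also correct.
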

	\begin{proof}
		Since $X$ is separable, then $X^N$ is separable. We can assume that $X_1\times\cdots\times X_N$ is a countable dense set of $X^N$, which is denoted by $\{(y_{1j},y_{2j},\ldots,y_{Nj}):y_{lj}\in X_l,1\leqslant l\leqslant N,j\in \mathbb{N}\}$.
		
		 Claim: there exist $x_j\in X$ and $k_j\in \mathbb{N}$ such that 
		$$x_1+{S_{1,k_1}}y_{11}+\cdots+{S_{N,k_1}}y_{N1}+x_2+{S_{1,k_2}}y_{12}+\cdots+{S_{N,k_2}}y_{N2}+x_3+\cdots$$
		convergents, we denote it by $x$ and $x$ is a d-Ces$\grave{a}$ro-hypercyclic vector.
		
		Next, we will construct $x_j$ and $k_j$ such that for $j\geqslant 1,1\leqslant m\leqslant N$, we have
		\begin{align}\label{eqm.1}
			\|x_j\|<\frac{1}{2^j},
	\end{align}
		\begin{align}\label{eqm.2}
		\bigg\|\frac{1}{n_{k_l}}{T_m^{n_{k_l}}}{x_j}\bigg\|<\frac{1}{2^j}\;(l=1,\ldots,j-1),
	\end{align}
	\begin{align}\label{eqm.3}
		\Big\|S_{i,k_j}y_{ij}\Big\|<\frac{1}{2^j}\;(1\leqslant i\leqslant N ),
	\end{align}
	\begin{align}\label{eqm.4}
	\bigg\|\frac{1}{n_{k_l}}{T_m^{n_{k_l}}}{S_{i,k_j}}{y_{ij}}\bigg\|<\frac{1}{2^j}\;(l=1,\ldots,j-1)\;(1\leqslant i\leqslant N ),
\end{align}
			\begin{align}\label{eqm.5}
			\bigg\|\frac{1}{n_{k_j}}T_m^{n_{k_j}}S_{m,k_j}y_{mj}-y_{mj}\bigg\|<\frac{1}{2^j},
		\end{align}
		\begin{align}\label{eqm.6}
			\bigg\|\frac{1}{n_{k_j}}T_m^{n_{k_j}}S_{i,k_j}y_{ij}\bigg\|<\frac{1}{2^j}\;(1\leqslant i\leqslant N,i\neq m),
	\end{align}
	\begin{align}\label{eqm.7}
			\bigg\|\frac{1}{n_{k_j}}T_m^{n_{k_j}}\big(\sum_{l=1}^{j-1}(x_l+\sum_{i=1}^{N}S_{i,k_l}y_{il}\big)+x_j)\bigg\|<\frac{1}{2^j}.
		\end{align}
		
			If $j=1$, (\ref{eqm.2})(\ref{eqm.4}) do not exist.
			 For (\ref{eqm.1})(\ref{eqm.7}), let $x_1=0$.
			 For (\ref{eqm.3}), by d-Ces$\grave{a}$ro-Hypercyclicity Criterion (2), we can know that there is $k_{v_1}$ so that when $k_1^*>k_{v_1}$, we have
			$$\Big\|S_{i,k_1^*}y_{i1}\Big\|<\frac{1}{2}.$$
			For (\ref{eqm.5})(\ref{eqm.6}), by d-Ces$\grave{a}$ro-Hypercyclicity Criterion (3), we can know that there is $k_{v_2}$ so that when $k_1^*>k_{v_2}$, we have
			$$\bigg\|\frac{1}{n_{k_1^*}}T_m^{n_{k_1^*}}S_{m,k_1^*}y_{m1}-y_{m1}\bigg\|<\frac{1}{2},\qquad\bigg\|\frac{1}{n_{k_1^*}}T_m^{n_{k_1^*}}S_{i,k_1^*}y_{i1}\bigg\|<\frac{1}{2}\;(1\leqslant i\leqslant N,i\neq m).$$
				Taking $k_1=\max\{k_{v_1}+1,k_{v_2}+1\}$, then (\ref{eqm.1})-(\ref{eqm.7}) hold.
				
			If $j\geqslant 2$, we assume that $x_1,\ldots,x_{j-1}$ and $k_1,\ldots,k_{j-1}$ have been found.
			
			Now, let we find $x_j$ and $k_j$.
			For (\ref{eqm.1})(\ref{eqm.2}), since $X_0$ is dense in $X$ and the continuity of $T_m$, we can find $x_j\in X$ satisfying $\|x_j\|<\frac{1}{2^j}$ and  $$\sum_{l=1}^{j-1}(x_l+\sum_{i=1}^{N}S_{i,k_l}y_{il})+x_j\in X_0,\quad\bigg\|\frac{1}{n_{k_l}}T_m^{n_{k_l}}x_j\bigg\|<\frac{1}{2^j}\;(l=1,\ldots,j-1). $$
				For (\ref{eqm.7}), by d-Ces$\grave{a}$ro-Hypercyclicity Criterion (1), we can know that there is $k_{j_1}$ so that when $k_j^*>k_{j_1}$, we have
			$$\bigg\|\frac{1}{n_{k_j^*}}T_m^{n_{k_j^*}}\big(\sum_{l=1}^{j-1}(x_l+\sum_{i=1}^{N}S_{i,k_l}y_{il})+x_j\big)\bigg\|<\frac{1}{2^j}.$$
				For (\ref{eqm.5})(\ref{eqm.6}), by d-Ces$\grave{a}$ro-Hypercyclicity Criterion (3), we can know that there is $k_{j_2}$ so that when $k_{j}^*>k_{j_2}$, we have
			$$\bigg\|\frac{1}{n_{k_j^*}}{T_m^{n_{k_j^*}}}{S_{m,k_j^*}}{y_{mj}}-y_{mj}\bigg\|<\frac{1}{2^j}\;\text{and}\;\bigg\|\frac{1}{n_{k_j^*}}{T_m^{n_{k_j^*}}}{S_{i,k_j^*}}{y_{ij}}\bigg\|<\frac{1}{2^j}\;(m\neq i).$$
			For (\ref{eqm.3})(\ref{eqm.4}), by d-Ces$\grave{a}$ro-Hypercyclicity Criterion (2) and the continuity of $T_m$, we can know that there is $k_{j'}>\max\{k_{j_1}+1,k_{j_2}+1,k_{j-1}\}$, we have $$\Big\|S_{i,k_j'}y_{ij}\Big\|<\frac{1}{2^j}\;\text{and}\; \bigg\|\frac{1}{n_{k_l}}{T_m^{n_{k_l}}}{S_{i,k_j'}}{y_{ij}}\bigg\|<\frac{1}{2^j}\;(l=1,\ldots,j-1). $$
			Let $k_j=k_j'$, we find $x_j$ and $k_j$, then (\ref{eqm.1})-(\ref{eqm.7}) hold.
		
				Since $\|x_j\|<\frac{1}{2^j}$ and $\Big\|S_{i,k_j}y_{ij}\Big\|<\frac{1}{2^j}$, then $x=\sum_{j=1}^{\infty}x_j+\sum_{j=1}^{\infty}\sum_{m=1}^{N}{S_{m,k_j}}{y_{mj}}$ is convergent and for any $y_{lj}\in X$, we have
				\begin{align*}
				\bigg\|\frac{1}{n_{k_j}}T_l^{n_{k_j}} x-y_{lj}\bigg\|&=\bigg\|\frac{1}{n_{k_j}}T_l^{n_{k_j}}\big(\sum_{m=1}^{j-1}(x_m+\sum_{i=1}^{N}S_{i,k_m}y_{im})+x_j\big)+\frac{1}{n_{k_j}}T_l^{n_{k_j}}S_{l,k_j}y_{lj}-y_{lj}+\\
				&\ \ \ \ \frac{1}{n_{k_j}}T_l^{n_{k_j}}(\sum_{i\neq l}S_{i,k_j}y_{ij})+\sum_{m=j+1}^{\infty}\frac{1}{n_{k_j}}T_l^{n_{k_j}}x_m+\sum_{m=j+1}^{\infty}\frac{1}{n_{k_j}}T_l^{n_{k_j}}(\sum_{i=1}^{N}S_{i,k_m}y_{im})\bigg\|\\
				&\leqslant\frac{1}{2^j}+\frac{1}{2^j}+\frac{N-1}{2^j}+\sum_{m=j+1}^{\infty}\frac{1}{2^m}+\sum_{m=j+1}^{\infty}\sum_{i=1}^{N}\bigg\|\frac{1}{n_{k_j}}T_l^{n_{k_j}}S_{i,k_m}y_{im}\bigg\|\\
				&\leqslant\frac{1}{2^j}+\frac{1}{2^j} +\frac{N-1}{2^j}+\sum_{m=j+1}^{\infty}\frac{1}{2^m}+\sum_{m=j+1}^{\infty}\frac{N}{2^m}\\
				&<\frac{2N+3}{2^j}\rightarrow 0\;(j\rightarrow \infty).
				\end{align*}
			Since $\{(y_{1j},y_{2j},\ldots,y_{Nj}):y_{lj}\in X_l,1\leqslant l\leqslant N,j\in \mathbb{N}\}$ is dense in $X^N$, then $x$ is a d-Ces$\grave{a}$ro-hypercyclic vector. Hence  $T_1,T_2,\ldots,T_N$ are d-Ces$\grave{a}$ro-hypercyclic operators.
			\end{proof}

			By constructing a d-Ces$\grave{a}$ro-hypercyclic vector, we have that operators satisfying d-Ces$\grave{a}$ro-Hypercyclicity Criterion are d-Ces$\grave{a}$ro-hypercyclic directly from definition. In the following study, we associate the d-Ces$\grave{a}$ro-Hypercyclicity Criterion with other disjoint dynamical properties. 
			
			
			
			\begin{defi}
				Let $(n_k)_k$ be an increasing sequence of positive integers. We say operators $T_1,\ldots,T_N$ are hereditarily d-Ces$\grave{a}$ro-hypercyclic with respect to $(n_k)_k$ if for each subsequence $(n_{k_j})_j$ of $(n_k)_k$, there is some $x\in X$ such that $\big\{(\frac{1}{n_{k_j}}T_1^{n_{k_j}}x,\ldots,\frac{1}{n_{k_j}}T_N^{n_{k_j}}x):j\geqslant 1\big\}$ is dense in $X^N$.
			\end{defi}
			In the following, we give the equivalent conditions of d-Ces$\grave{a}$ro-Hypercyclicity Criterion, which is a special case in \cite{BP}.
			\begin{lem}\label{thm3.9}
				Let $T_l\in B(X)\;(1\leqslant l\leqslant N)$, where $N\geqslant 2$. The following are equivalent:\\
				(a) $T_1,\ldots,T_N$ satisfy the d-Ces$\grave{a}$ro-Hypercyclicity Criterion.\\
				(b) $T_1,\ldots,T_N$ are hereditarily densely d-Ces$\grave{a}$ro-hypercyclic.\\
				(c) For each $r\in\mathbb{N}$, $\overbrace{T_1\oplus\cdots\oplus T_1}^r,\ldots,\overbrace{T_N\oplus\cdots\oplus T_N}^r$ are d-Ces$\grave{a}$ro-topologically transitive on $X^r$.
			\end{lem}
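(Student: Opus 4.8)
The plan is to establish the cycle of implications $(a)\Rightarrow(b)\Rightarrow(c)\Rightarrow(a)$, modelled on the analogous characterization of the d-Hypercyclicity Criterion in \cite{BP}. The two ``downward'' implications $(a)\Rightarrow(b)$ and $(b)\Rightarrow(c)$ are soft consequences of one observation: both the Criterion and hereditary dense d-Ces\`aro-hypercyclicity force a \emph{Ces\`aro-mixing along $(n_k)$} property, meaning that for all nonempty open $U,V_1,\dots,V_N$ there is $K$ with $U\cap(n_kT_1^{-n_k})V_1\cap\cdots\cap(n_kT_N^{-n_k})V_N\neq\varnothing$ for every $k\ge K$. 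The single genuinely hard step is $(c)\Rightarrow(a)$, where the structured data $X_0,\dots,X_N$ and $S_{l,k}$ must be manufactured out of a purely dynamical transitivity hypothesis.

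For $(a)\Rightarrow(b)$, I first note that if $T_1,\dots,T_N$ satisfy the Criterion with respect to $(n_k)$ then they satisfy it with respect to every subsequence $(n_{k_j})$, since each of the three pointwise limits in Definition \ref{7.11} persists along subsequences (reuse the same dense sets and restrict the maps). It then suffices to show the Criterion along a sequence yields a \emph{dense} set of d-Ces\`aro-hypercyclic vectors along that sequence. Fix nonempty open $U,V_1,\dots,V_N$, choose $x_0\in X_0\cap U$ and $y_l\in X_l\cap V_l$, and set $z_k=x_0+\sum_{l=1}^N S_{l,k}y_l$. Condition (2) gives $z_k\to x_0$, so $z_k\in U$ eventually, while conditions (1) and (3) give $\tfrac1{n_k}T_m^{n_k}z_k\to y_m\in V_m$ for each $m$. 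Hence $z_k\in U\cap(n_kT_1^{-n_k})V_1\cap\cdots\cap(n_kT_N^{-n_k})V_N$ for all large $k$, which is Ces\`aro-mixing along the (sub)sequence; in particular $T_1,\dots,T_N$ are d-Ces\`aro-topologically transitive along it, and the Baire argument of Proposition \ref{6.2}, run along the subsequence, produces a dense $G_\delta$ of vectors whose Ces\`aro orbit along $(n_{k_j})$ is dense. As this holds for every subsequence, $(b)$ follows.

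For $(b)\Rightarrow(c)$, I upgrade $(b)$ to Ces\`aro-mixing along $(n_k)$: if it failed, some open $U,V_1,\dots,V_N$ and some subsequence $(n_{k_j})$ would satisfy $U\cap\bigcap_l(n_{k_j}T_l^{-n_{k_j}})V_l=\varnothing$ for all $j$, yet by $(b)$ the d-Ces\`aro-hypercyclic vectors along $(n_{k_j})$ are dense, so one lies in $U$ and its orbit meets $V_1\times\cdots\times V_N$ at some $n_{k_j}$, a contradiction. Now fix $r$ and basic open boxes $U=\prod_s U_s$, $V_l=\prod_s V_{l,s}$ in $X^r$. For each coordinate $s$, mixing gives $K_s$ with $U_s\cap\bigcap_l(n_kT_l^{-n_k})V_{l,s}\neq\varnothing$ for all $k\ge K_s$; taking $k\ge\max_s K_s$ yields one common time $n:=n_k$ and \emph{coordinatewise} witnesses $u_s\in U_s$ with $\tfrac1nT_l^n u_s\in V_{l,s}$. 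Since $\tfrac1n(T_l^{\oplus r})^n(u_1,\dots,u_r)=(\tfrac1nT_l^n u_1,\dots,\tfrac1nT_l^n u_r)$, the point $(u_1,\dots,u_r)$ witnesses d-Ces\`aro-topological transitivity of $T_1^{\oplus r},\dots,T_N^{\oplus r}$. The decisive use of mixing rather than mere transitivity is exactly that it lets the \emph{same} $n$ serve all $r$ coordinates at once.

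For $(c)\Rightarrow(a)$, the hard direction, fix a countable dense set $D=\{u_j\}$ and build, by a diagonal recursion, an increasing sequence $(n_k)$ and maps $S_{l,k}$ on $D$ (with $S_{l,k}u_j=0$ for $j>k$). At stage $k$ I apply d-Ces\`aro-topological transitivity of $T_1^{\oplus r_k},\dots,T_N^{\oplus r_k}$ for a large $r_k$ whose coordinates are split into ``blow-up'' slots indexed by $(l,j)$ with $1\le l\le N$, $1\le j\le k$, and ``collapse'' slots (the same dichotomy underlying Definition \ref{6.9}): the source box asks the blow-up slots to be near $0$ and the collapse slots near the already-committed vectors, while the target box asks $\tfrac1{n_k}T_m^{n_k}$ to send the $(l,j)$-slot near $\delta_{l m}u_j$ and every collapse slot near $0$. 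Reading $S_{l,k}u_j$ off the blow-up coordinates gives conditions (2) and (3), and choosing $n_k>n_{k-1}$ with error $<2^{-k}$ at each stage secures the pointwise limits. The \textbf{main obstacle} is condition (1): one needs a single dense $X_0$ on which $\tfrac1{n_k}T_m^{n_k}\to0$ along the one sequence being built. Transitivity only ever returns a source \emph{near} a prescribed vector, not the vector itself, so $X_0$ cannot be prescribed in advance; instead it is assembled from the collapse-slot witnesses, and one must guarantee that the finitely many vectors already placed in $X_0$ can be collapsed \emph{simultaneously} at each new time $n_k$. This is arranged as in \cite{BP} by a telescoping scheme in which the collapse targets of all previously chosen $X_0$-vectors are re-imposed at every later stage, so that the growing direct sum controls the whole committed finite family at once; verifying that the resulting $X_0$ is dense and that conditions (1)--(3) close up together along $(n_k)$ is the technical heart of the argument.
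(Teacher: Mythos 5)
Your overall architecture $(a)\Rightarrow(b)\Rightarrow(c)\Rightarrow(a)$ is the same as the paper's, and the two soft implications are sound. Your $(a)\Rightarrow(b)$ (the Criterion passes to subsequences; $z_k=x_0+\sum_{l}S_{l,k}y_l$ witnesses transitivity along the subsequence; Baire via Proposition \ref{6.2}) is exactly the paper's route. Your $(b)\Rightarrow(c)$ is a mild variant: you first upgrade $(b)$ to a mixing property along $(n_k)$ and then serve all $r$ coordinates with one common time, whereas the paper extracts $r$ nested subsequences, one per coordinate block, and picks a time inside the innermost one; both arguments are correct, and yours is arguably tidier.

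The genuine gap is in $(c)\Rightarrow(a)$, precisely at the step you yourself call the technical heart and then do not carry out; moreover, the mechanism you describe for it cannot work. You propose to commit collapse-slot witnesses to $X_0$ as the recursion proceeds and, at each later stage, to ``re-impose'' their collapse targets through a larger direct sum. But, as you correctly observe one sentence earlier, d-Ces$\grave{a}$ro-topological transitivity only ever returns a point \emph{near} a prescribed vector. At stage $k+1$ it produces a \emph{fresh} witness near each committed vector $w$ such that $\frac{1}{n_{k+1}}T_m^{n_{k+1}}(\text{fresh witness})$ is small; it gives no information whatsoever about $\frac{1}{n_{k+1}}T_m^{n_{k+1}}w$ itself. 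Enlarging $r$ does not change this, so the already-committed family is never controlled at any later time, and condition (1) of Definition \ref{7.11} never closes up. (Your prescribed countable dense set $D$ is fine for $X_1,\dots,X_N$ and conditions (2)--(3), since there the approximated objects are the \emph{values} $S_{l,k}u_j$ and the targets $\delta_{lm}u_j$ need only be hit approximately with error $2^{-k}$; the obstruction is solely condition (1).)

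What actually closes the argument --- and this is how the paper, following \cite{BP}, does it --- is to replace committed points by nested open sets and to exploit \emph{continuity} of the operators. One builds times $n_1<n_2<\cdots$ and, for each slot, a decreasing chain of nonempty open sets $A_{l,k,i}$ with $\overline{A_{l,k,i+1}}\subset A_{l,k,i}$ and diameters tending to $0$: at stage $i$, transitivity of the direct sums gives a point of the current set whose image under $\frac{1}{n_i}T_m^{n_i}$ lies in the open ball $W_i=B(0,\frac{1}{i})$, and continuity of $T_m^{n_i}$ then allows the set to be shrunk so that the image of its entire \emph{closure} lies in $W_i$. The elements of $X_0$ (and likewise of each $X_l$) are defined only at the very end, as the unique points $a_{l,k}\in\bigcap_{i}\overline{A_{l,k,i}}$; such a point lies in every shrunken closure, hence inherits the collapse estimate at \emph{every} time $n_i$ simultaneously, which is exactly condition (1). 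Density of these sets holds because each chain starts from a member of a countable basis, and conditions (2)--(3) follow because the blow-up witnesses are mapped into sets of diameter less than $\frac{1}{i+1}$ whose closures contain $a_{l,k}$. Without this closure-plus-continuity device, your recursion produces no fixed dense set on which $\frac{1}{n_k}T_m^{n_k}\to 0$, so the hard direction remains unproved as written.
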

		\begin{thm} \label{th3.8}
			Suppose operators $T_1,\ldots,T_N\in B(X)$ satisfy d-Ces$\grave{a}$ro-Hypercyclicity Criterion, then they are d-Ces$\grave{a}$ro-Blow-Up/Collapse.
			
		\end{thm}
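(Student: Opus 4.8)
The plan is to verify directly the two displayed conditions of Definition \ref{6.9}, using condition (1) of the d-Ces\`aro-Hypercyclicity Criterion to produce the ``collapse'' witness and conditions (2)--(3) to produce the ``blow-up'' witness, and then to observe that both witnesses can be obtained along the same index $m=n_k$, for all sufficiently large $k$. Throughout, $(n_k)$, the dense sets $X_0,X_1,\ldots,X_N$ and the maps $S_{l,k}$ are those furnished by the criterion.

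First I would fix a $0$-neighbourhood $W$ and non-empty open sets $V_0,V_1,\ldots,V_N$. For the collapse condition $V_0\cap\bigcap_{l=1}^N (mT_l^{-m})(W)\neq\varnothing$, I would use the density of $X_0$ to select $v\in X_0\cap V_0$. Criterion (1) gives $\frac{1}{n_k}T_l^{n_k}v\to 0$ for each $l$, and since there are only finitely many indices $l$, there is a threshold beyond which $\frac{1}{n_k}T_l^{n_k}v\in W$ for all $1\leqslant l\leqslant N$ simultaneously; this $v$ then lies in $V_0\cap\bigcap_{l=1}^N (n_kT_l^{-n_k})(W)$.

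Next, for the blow-up condition $W\cap\bigcap_{l=1}^N (mT_l^{-m})(V_l)\neq\varnothing$, I would pick $y_l\in X_l\cap V_l$ for each $l$ (again by density) and set $u_k:=\sum_{i=1}^N S_{i,k}y_i$. Criterion (2) forces each summand, hence $u_k$, to tend to $0$, so $u_k\in W$ for large $k$. Applying $\frac{1}{n_k}T_l^{n_k}$ and using criterion (3) termwise, $\frac{1}{n_k}T_l^{n_k}u_k=\sum_{i=1}^N \frac{1}{n_k}T_l^{n_k}S_{i,k}y_i\to\sum_{i=1}^N \delta_{i,l}y_i=y_l\in V_l$; since each $V_l$ is open and there are finitely many $l$, this membership holds for all $l$ once $k$ is large, so $u_k$ witnesses the blow-up condition.

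Finally I would choose a single $k$ large enough that all of the finitely many threshold conditions above hold at once and set $m=n_k$; then $v$ and $u_k$ simultaneously satisfy the two requirements of Definition \ref{6.9}, which is exactly d-Ces\`aro-Blow-Up/Collapse. I do not expect a serious obstacle, as the argument is a direct unwinding of the criterion. The only point requiring care is coordinating the two parts on a common $m$, which is resolved because each of the finitely many limit statements holds for \emph{all} sufficiently large $k$, so intersecting the thresholds still leaves infinitely many admissible indices.
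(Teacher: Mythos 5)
Your proof is correct, but it takes a genuinely different route from the paper's. The paper deduces the theorem from Lemma \ref{thm3.9} (whose proof it does not spell out, deferring to \cite{BP}): the criterion implies that for every $r$ the $r$-fold direct sums $T_l\oplus\cdots\oplus T_l$ are d-Ces$\grave{a}$ro-topologically transitive on $X^r$, and then taking $r=2$ with the choice $V_{0,1}=V_{i,2}=W$, $V_{0,2}=V_0$, $V_{i,1}=V_i$ $(1\leqslant i\leqslant N)$ yields a single $m$ witnessing both displayed conditions of Definition \ref{6.9} at once. You instead verify the two conditions directly from the data furnished by the criterion: the collapse witness $v\in X_0\cap V_0$ via condition (1), and the blow-up witness $u_k=\sum_{i=1}^{N}S_{i,k}y_i$ with $y_i\in X_i\cap V_i$ via conditions (2)--(3), with the coordination on a common index handled by noting that each of the finitely many limit statements holds for all sufficiently large $k$, so one may set $m=n_k$ for a single large $k$. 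All steps check out (density gives the required intersections $X_0\cap V_0$ and $X_l\cap V_l$; criterion (3) applied termwise gives $\frac{1}{n_k}T_l^{n_k}u_k\to y_l$). What the paper's route buys is that the $r=2$ trick makes the same-$m$ synchronization automatic and exhibits the theorem as a formal consequence of transitivity of direct sums; what yours buys is a short, self-contained argument from first principles that avoids the unproved Lemma \ref{thm3.9} entirely, and is essentially the classical Blow-Up/Collapse construction adapted to the Ces$\grave{a}$ro setting.
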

		
		\begin{proof}
			Let $ V_0,V_1,\ldots,V_N $ be non-empty open subsets of $X$ and W be an open neighbourhood of zero. By lemma \ref{thm3.9}, we know that for each $r\in\mathbb{N}$, $\overbrace{T_1\oplus\cdots\oplus T_1}^r,\ldots,\overbrace{T_N\oplus\cdots\oplus T_N}^r$ are d-Ces$\grave{a}$ro-topologically transitive on $X^r$, then there exists $m\in \mathbb{N}$, such that
			$$V_{0,k}\bigcap\cap_{l=1}^{N}mT_l^{-m}(V_{l,k})\neq \varnothing\;(1\leqslant k\leqslant r).$$ For $r=2,V_{0,1}=V_{i,2}=W\;(1\leqslant i\leqslant N),V_{0,2}=V_0,V_{i,1}=V_i\;(1\leqslant i\leqslant N)$,  there exists $m_0\in \mathbb{N}$ such that
			$$W\cap m_0T_1^{-m_0}V_1\cap\cdots\cap m_0T_N^{-m_0}V_N\neq\varnothing,$$
			$$V_0\cap m_0T_1^{-m_0}W\cap\cdots\cap m_0T_N^{-m_0}W\neq\varnothing,$$
			then $T_1,\ldots,T_N$ are d-Ces$\grave{a}$ro-Blow-Up/Collapse.
		\end{proof}
		\begin{rem}
	The following is a summary of the second method to prove that operators satisfying d-Ces$\grave{a}$ro-Hypercyclicity Criterion are d-Ces$\grave{a}$ro-hypercyclic. First, by theorem \ref{th3.8}, we can conclude that operators $T_1, \ldots, T_N$ $( N \geq 2 )$ satisfying the d-Ces$\grave{a}$ro-Hypercyclicity Criterion are d-Ces$\grave{a}$ro-Blow-Up/Collapse. Next, by proposition \ref{pr3.3}, we show that operators satisfying the d-Ces$\grave{a}$ro-Blow-Up/Collapse are d-Ces$\grave{a}$ro-topologically transitive. Then, by proposition \ref{6.2}, we further establish that d-Ces$\grave{a}$ro-topologically transitive operators possess dense d-Ces$\grave{a}$ro-hypercyclic vectors. In summary, operators $T_1, \ldots, T_N$ satisfying d-Ces$\grave{a}$ro-Hypercyclicity Criterion are d-Ces$\grave{a}$ro-hypercyclic. 		
		\end{rem}
		\section{d-ces$\grave{A}$ro-hypercyclic weighted shifts }\label{s4}
		Let $X=c_0(\mathbb{N})$ or $\ell^p(\mathbb{N})(1\leqslant p<\infty)$ over the complex scalar field $\mathbb{C}$. Let $B_a:X\rightarrow X$ be the unilateral weighted backward shift
$$B_{a}(x_0,x_1,x_2,\ldots)= (a_{1}x_1,a_{2}x_2,\ldots),$$
where $a=(a_k)_k$ is a bounded weight sequence. Rolewicz \cite{SR} showed that if $B$ is a unilateral backward shift on the Banach space $\ell^p(\mathbb{N})$, then $\lambda B$ is hypercyclic if and only if $|\lambda| >1$. Salas \cite{SHN} described the characteristics of weight sequences of unilateral weighted backward shifts on $\ell^2(\mathbb{N})$.  Bès and Peris \cite{BP} described the characteristics of weight sequences of disjoint hypercyclic unilateral weighted backward shifts on $\ell^p(\mathbb{N})\;(1\leqslant p<\infty)$ or $c_0(\mathbb{N})$. León-Saavedra \cite{LS} described the characteristics of weight sequences of Ces$\grave{a}$ro-hypercyclic unilateral weighted backward shifts on $\ell^2(\mathbb{N})$ and illustrated every Ces$\grave{a}$ro-hypercyclic unilateral weighted backward shift is hypercyclic.
		
	Next, we describe the characteristics of weight sequence of disjoint Ces$\grave{a}$ro-hypercyclic unilateral weighted backward shifts.		
		\begin{thm}\label{4.1}
		
		Let $X=c_0(\mathbb{N})$ or $\ell^2(\mathbb{N})$ over the complex scalar field $\mathbb{C}$ and let integers $1\leqslant r_1< r_2<\cdots < r_N$ be given. For each $1\leqslant l\leqslant N$, let $a_l=(a_{l,n})_{n=1}^\infty$ be a bounded weight sequence and $B_{a_l}:X\rightarrow X$ be the corresponding unilateral weighted backward shift
		$$B_{a_l}(x_0,x_1,x_2,\ldots)= (a_{l,1}x_1,a_{l,2}x_2,\ldots).$$
		Then the following are equivalent:\\
		(a) $B_{a_1}^{r_1},\ldots,B_{a_N}^{r_N}$ are d-Ces$\grave{a}$ro-hypercyclic.\\
(b) For each $\epsilon>0$ and $q\in\mathbb{N}$, there exists $m\in\mathbb{N}$ satisfying: for each $0\leqslant j\leqslant q$,
		$$\bigg|\frac{1}{m}a_{l,j+1}\ldots a_{l,j+r_{l}m}\bigg|>\frac{1}{\epsilon}\qquad(1\leqslant l\leqslant N),$$
		$$\Bigg|\frac{a_{l,j+1}\ldots a_{l,j+r_{l}m}}{a_{s,j+(r_l-r_s)m+1}\ldots a_{s,j+r_lm}}\bigg|>\frac{1}{\epsilon}\qquad(1\leqslant s<l\leqslant N).$$
		(c) $B_{a_1}^{r_1},\ldots,B_{a_N}^{r_N}$ satisfy the d-Ces$\grave{a}$ro-Hypercyclicity Criterion.
		
		\end{thm}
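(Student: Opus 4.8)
The plan is to prove the cycle $(c)\Rightarrow(a)\Rightarrow(b)\Rightarrow(c)$. The implication $(c)\Rightarrow(a)$ is immediate from Theorem \ref{dchc}. Throughout I write $(e_j)_{j\ge 0}$ for the canonical basis, set $T_l=B_{a_l}^{r_l}$, and record the one computation everything rests on: since $B_{a_l}e_n=a_{l,n}e_{n-1}$, for $n\ge r_lm$ one has
$$\frac1m T_l^m e_n=\frac1m B_{a_l}^{r_lm}e_n=\frac1m\Big(\prod_{t=1}^{r_lm}a_{l,\,n-r_lm+t}\Big)e_{n-r_lm},$$
so the coefficient of $e_j$ in $\frac1m T_l^m x$ equals $\frac1m x_{j+r_lm}\prod_{t=1}^{r_lm}a_{l,j+t}$. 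Both families of inequalities in $(b)$ will be read off from this single formula.

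For $(b)\Rightarrow(c)$ I would take $X_0=\cdots=X_N=\mathrm{span}\{e_j:j\ge 0\}$ and, along an increasing scale $(n_k)$ obtained from $(b)$ with parameters $(\epsilon,q)=(1/k,k)$, define the right inverses $S_{l,k}e_j=\dfrac{n_k}{\prod_{t=1}^{r_ln_k}a_{l,j+t}}\,e_{j+r_ln_k}$ (well defined since $(b)$ forces the products to be nonzero), extended linearly. Condition $(1)$ of Definition \ref{7.11} is automatic because $\frac1{n_k}T_l^{n_k}e_j=0$ once $r_ln_k>j$; condition $(2)$ reads $\|S_{l,k}e_j\|=\big|\tfrac1{n_k}\prod_{t=1}^{r_ln_k}a_{l,j+t}\big|^{-1}<1/k$, the first inequality of $(b)$; and in condition $(3)$ the diagonal term $\frac1{n_k}T_l^{n_k}S_{l,k}e_j=e_j$ holds exactly. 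For the off-diagonal term $\frac1{n_k}T_l^{n_k}S_{i,k}e_j$ with $l\neq i$ there are two cases: if $r_l>r_i$ the shift drives the support below $0$ and the term vanishes for large $n_k$, while if $r_l<r_i$ its norm is precisely $\big|a_{l,j+(r_i-r_l)n_k+1}\cdots a_{l,j+r_in_k}\big|\big/\big|a_{i,j+1}\cdots a_{i,j+r_in_k}\big|$, which is the second inequality of $(b)$ with $s=l<i$, hence $<1/k$. The one point needing care is that Definition \ref{7.11} demands an \emph{increasing} $(n_k)$: applying $(b)$ produces integers $m_k$, and these must be unbounded, for otherwise some fixed $m^\ast$ would satisfy $\big|\tfrac1{m^\ast}\prod_{t=1}^{r_lm^\ast}a_{l,t}\big|>k$ for all $k$, impossible as the left side is a fixed finite number; passing to a subsequence $m_{k_j}\uparrow\infty$ gives the required scale with the errors still tending to $0$.

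For $(a)\Rightarrow(b)$, fix a d-Ces\`aro-hypercyclic vector $x$ and $\epsilon>0$, $q\in\mathbb{N}$. Since the infinite-dimensional space $X^N$ has no isolated points, deleting finitely many orbit points leaves the orbit dense, so the tuple $(\frac1n T_1^nx,\dots,\frac1n T_N^nx)$ enters any prescribed open set for \emph{arbitrarily large} $n$. Choose $\delta<\epsilon/(1+\epsilon)$ and, using $x_n\to0$, an index $M$ with $|x_n|<\epsilon/2$ for $n\ge M$; then pick $m>\max\{q,M\}$ with $\|\frac1m T_l^m x-\sum_{p=0}^q e_p\|<\delta$ for every $l$. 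The coefficient of $e_j$ ($0\le j\le q$) in the $l$-th coordinate gives $\big|\tfrac1m x_{j+r_lm}\prod_{t=1}^{r_lm}a_{l,j+t}\big|>1-\delta$, and since $|x_{j+r_lm}|<\epsilon/2$ this yields the first inequality of $(b)$. For the cross inequality with $s<l$, look in the $s$-th coordinate at the coefficient of $e_{j'}$ with $j'=j+(r_l-r_s)m>q$: the target has no such term, so $\big|\tfrac1m x_{j+r_lm}\,a_{s,j+(r_l-r_s)m+1}\cdots a_{s,j+r_lm}\big|<\delta$ (note $j'+r_sm=j+r_lm$). Dividing the lower bound from the $l$-th coordinate by this upper bound, the common factor $\frac1m x_{j+r_lm}$ cancels and produces $\big|a_{l,j+1}\cdots a_{l,j+r_lm}\big|\big/\big|a_{s,j+(r_l-r_s)m+1}\cdots a_{s,j+r_lm}\big|>(1-\delta)/\delta>1/\epsilon$, the second inequality of $(b)$.

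The main obstacle is the bookkeeping in $(a)\Rightarrow(b)$: one must use the single target $\sum_{p=0}^q e_p$ in every coordinate so that its low-frequency part ($0\le j\le q$) drives the diagonal estimate, while the \emph{vanishing} of its coefficient at the shifted index $j+(r_l-r_s)m$ drives the cross estimate, and both must be extracted at one common return time $m$. The key simplification making this tractable is that the unknown coefficient $x_{j+r_lm}$ cancels in the ratio, so only its tail smallness is needed; that in turn requires the freedom to take $m$ arbitrarily large, which is exactly what perfectness of $X^N$ supplies.
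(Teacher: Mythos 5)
Your proposal is correct and follows essentially the same route as the paper: the same cycle of implications, the same right inverses $S_{l,k}e_j=\frac{n_k}{a_{l,j+1}\cdots a_{l,j+r_ln_k}}\,e_{j+r_ln_k}$ in (b)$\Rightarrow$(c), and the same cancellation of the common factor $\frac{1}{m}x_{j+r_lm}$ in (a)$\Rightarrow$(b); you even supply two details the paper leaves implicit (arbitrarily large return times via density of the orbit minus finitely many points, and unboundedness of the integers produced by (b)). One small slip: with your tail bound $|x_n|<\epsilon/2$ the first inequality of (b) comes out as $\big|\frac{1}{m}a_{l,j+1}\cdots a_{l,j+r_lm}\big|>2(1-\delta)/\epsilon$, which exceeds $1/\epsilon$ only if $\delta<1/2$, so you should take $\delta<\min\{1/2,\,\epsilon/(1+\epsilon)\}$ or note that (b) for small $\epsilon$ implies (b) for all $\epsilon$ (the paper avoids this by bounding the tail by $\delta$ itself, giving $(1-\delta)/\delta>1/\epsilon$ directly).
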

		\begin{proof}
			
			(a)$\Rightarrow$(b).
 			Let $\epsilon>0$ and $q\in\mathbb{N}$ be given, pick $0<\delta<1$ with $\delta/(1-\delta)<\epsilon$, and let $x=(x_0,x_1,x_2,\ldots)$ be a d-Ces$\grave{a}$ro-hypercyclic vector for $B_{a_1}^{r_1},\ldots,B_{a_N}^{r_N}$. Now, let $m\in\mathbb{N}\;(m>q)$ so that for $k\geqslant r_1m$,
			$$|x_k|<\delta,$$
			and
			$$\bigg\|\frac{1}{m}B_{a_l}^{r_lm}x-(e_0+e_1+\cdots+e_q)\bigg\|<\delta\qquad(1\leqslant l\leqslant N).$$
			Then for $1\leqslant l\leqslant N$, we have
			\begin{equation*}
				\begin{cases}
			1-\delta<\Big|\frac{1}{m}a_{l,i+1}\ldots a_{l,i+r_lm}x_{i+r_lm}\Big|<1+\delta,\quad&\text{if}\;0\leqslant i\leqslant q,\\
			\Big|\frac{1}{m}a_{l,i+1}\ldots a_{l,i+r_lm}x_{i+r_lm}\Big|<\delta,\quad&\text{if}\; i>q.
		\end{cases}
	\end{equation*}	
	
			Now, let $0\leqslant j\leqslant q$ and $1\leqslant l\leqslant N$ be fixed, we have
			$$\bigg|\frac{1}{m}a_{l,j+1}\ldots a_{l,j+r_lm}\bigg|>\frac{1-\delta}{|x_{j+r_lm}|}>\frac{1-\delta}{\delta}>\frac{1}{\epsilon}.$$
			
			Also, for $1\leqslant s<l\leqslant N$,
			\begin{equation*}
				\begin{split}
				\frac{\Big|a_{l,j+1}\ldots a_{l,j+r_lm}\Big|}{\Big|a_{s,j+(r_l-r_s)m+1}\ldots a_{s,j+r_lm}\Big|}&=
				\frac{\Big|\displaystyle{\frac{a_{l,j+1}\ldots a_{l,j+r_lm}x_{j+r_lm}}{m}}\Big|}{\Big|\displaystyle{\frac{a_{s,j+(r_l-r_s)m+1}\ldots a_{s,j+r_lm}x_{j+r_lm}}{m}}\Big|}\\
				&>\frac{1-\delta}{\Big|\displaystyle{\frac{a_{s,j+(r_l-r_s)m+1}\ldots a_{s,j+r_lm}x_{j+r_lm}}{m}}\Big|}\\
				&=\frac{1-\delta}{\Big|\displaystyle{\frac{a_{s,i+1}\ldots a_{s,i+r_sm}x_{i+r_sm}}{m}}\Big|}\\
				&> \frac{1-\delta}{\delta}\\
				&>\frac{1}{\epsilon},
				\end{split}
			\end{equation*}
			where $i=j+(r_l-r_s)m$, $i>q$.
			
			(b)$\Rightarrow$(c). By (b), there exist integers $1\leqslant n_1<n_2<\cdots$ satisfying for each $q\in \mathbb{N}$ and each $0\leqslant j\leqslant q$,
			\begin{equation}\label{eq4.1}
				\Big|\frac{a_{l,j+1}\ldots a_{l,j+r_ln_q}}{n_q}\Big|>q\qquad(1\leqslant l\leqslant N),
				\end{equation}
				\begin{equation}\label{eq4.2}
					\frac{\Big|a_{l,j+1}\ldots a_{l,j+r_ln_q}\Big|}{\Big|a_{s,j+(r_l-r_s)n_q+1}\ldots a_{s,j+r_ln_q}\Big|}>q\qquad(1\leqslant s <l\leqslant N).
					\end{equation}	
			
			 Let $X_0=\cdots=X_N=\text{span}\{e_0,e_1,\ldots\}$, then $X_0$ is dense in $X$ and we have $\frac{1}{n_q}B_{a_l}^{r_ln_q}\xrightarrow{q\rightarrow \infty} 0 $ pointwise on $X_0\;(1\leqslant l\leqslant N)$. 
			
			For each $1\leqslant l\leqslant N$ and $q\in\mathbb{N}$, consider the mapping $S_{l,q}:X_0\rightarrow X$ given by
			$$S_{l,q}(x_0,x_1,\ldots)=nq\Big(\overbrace{0,\ldots,0}^{r_ln_q},\frac{x_0}{a_{l,1}\ldots a_{l,r_ln_q}},\frac{x_1}{a_{l,2}\ldots a_{l,r_ln_q+1}},\ldots\Big),$$
			then by (\ref{eq4.1}) $S_{l,q}\xrightarrow{q\rightarrow \infty} 0$ and we have $\frac{1}{n_q}B_{a_l}^{r_ln_q}S_{l,q}-Id_{X_0}\xrightarrow{q\rightarrow \infty} 0$ pointwise on $X_0$.
			
			Now, for $1\leqslant s< l\leqslant N$, we have
			$$\frac{1}{n_q}B_{a_l}^{r_ln_q}S_{s,q}\xrightarrow{q\rightarrow \infty} 0 \;\text{pointwise on}\; X_0,\;\text{since}\;r_s<r_l.$$
			Also,
			$$\frac{1}{n_q}B_{a_s}^{r_sn_q}S_{l,q}(x_0,x_1,\ldots)=(0,\ldots,0,\frac{a_{s,r_ln_q}\ldots a_{s,(r_l-r_s)n_q+1}x_0}{a_{l,1}\ldots a_{l,r_ln_q}},\ldots)$$
			then by (\ref{eq4.2}) $\frac{1}{n_q}B_{a_s}^{r_sn_q}S_{l,q}\xrightarrow{q\rightarrow \infty} 0$ pointwise on $X_0$. So $B_{a_1}^{r_1},\ldots,B_{a_N}^{r_N}$ satisfy the d-Ces$\grave{a}$ro-Hypercyclicity Criterion.
			
			(c)$\Rightarrow$(a). Obivously.
		\end{proof}
		
		\begin{cor}
			 Let $X= \ell^2(\mathbb{N})$, then disjoint Ces$\grave{a}$ro-hypercyclic unilateral weighted backward shifts on $X$ are disjoint hypercyclic.
				\end{cor}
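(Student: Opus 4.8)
The plan is to read off the corollary by comparing the weight-sequence characterization of d-Ces\`aro-hypercyclicity just obtained in Theorem~\ref{4.1} with the corresponding characterization of ordinary disjoint hypercyclicity for weighted backward shifts due to B\`es and Peris~\cite{BP}. First I would recall that \cite{BP} shows $B_{a_1}^{r_1},\ldots,B_{a_N}^{r_N}$ are disjoint hypercyclic on $\ell^2(\mathbb{N})$ if and only if for every $\epsilon>0$ and $q\in\mathbb{N}$ there is $m\in\mathbb{N}$ such that, for each $0\leqslant j\leqslant q$,
$$\bigl|a_{l,j+1}\ldots a_{l,j+r_{l}m}\bigr|>\tfrac{1}{\epsilon}\quad(1\leqslant l\leqslant N),\qquad \Bigl|\tfrac{a_{l,j+1}\ldots a_{l,j+r_{l}m}}{a_{s,j+(r_l-r_s)m+1}\ldots a_{s,j+r_lm}}\Bigr|>\tfrac{1}{\epsilon}\quad(1\leqslant s<l\leqslant N).$$
The only difference from condition (b) of Theorem~\ref{4.1} is the factor $\tfrac1m$ appearing in the first (growth) family of inequalities; the ratio inequalities are literally the same.

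The key observation is then immediate. Suppose $B_{a_1}^{r_1},\ldots,B_{a_N}^{r_N}$ are d-Ces\`aro-hypercyclic. By Theorem~\ref{4.1}, condition (b) holds, so given $\epsilon>0$ and $q\in\mathbb{N}$ we obtain $m\in\mathbb{N}$ with $\bigl|\tfrac1m a_{l,j+1}\ldots a_{l,j+r_lm}\bigr|>\tfrac1\epsilon$ for all $0\leqslant j\leqslant q$. Since $m\in\mathbb{N}$ forces $m\geqslant 1$ and hence $\tfrac1m\leqslant 1$, we get $\bigl|a_{l,j+1}\ldots a_{l,j+r_lm}\bigr|\geqslant \bigl|\tfrac1m a_{l,j+1}\ldots a_{l,j+r_lm}\bigr|>\tfrac1\epsilon$, i.e.\ the growth inequalities of the B\`es--Peris condition hold with the very same $m$. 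Because the ratio inequalities in (b) coincide with those in the B\`es--Peris condition, the entire condition is met, and therefore $B_{a_1}^{r_1},\ldots,B_{a_N}^{r_N}$ satisfy the d-Hypercyclicity Criterion and in particular are disjoint hypercyclic.

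I do not expect a genuine obstacle here: the argument is purely a comparison of two parallel conditions, the conceptual point being that the Ces\`aro normalization by $\tfrac1m$ only makes the growth requirement harder to satisfy, never easier, so d-Ces\`aro-hypercyclicity is formally stronger than d-hypercyclicity for these shifts. The only care needed is bookkeeping: verifying that the index ranges and the denominators $a_{s,j+(r_l-r_s)m+1}\ldots a_{s,j+r_lm}$ match exactly between Theorem~\ref{4.1}(b) and the condition quoted from \cite{BP}, and noting that the B\`es--Peris characterization is available precisely on $X=\ell^2(\mathbb{N})$ (as a special case of $\ell^p$), which is why the corollary is phrased for $\ell^2(\mathbb{N})$.
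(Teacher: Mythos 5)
Your proposal is correct and is exactly the argument this corollary rests on: the paper states it without a printed proof as an immediate consequence of comparing condition (b) of Theorem~\ref{4.1} with the B\`es--Peris \cite{BP} weight characterization of d-hypercyclicity for shift powers, and your key observation --- that $\frac{1}{m}\leqslant 1$ makes the Ces\`aro growth inequalities formally stronger while the ratio inequalities coincide, so the same $m$ witnesses the \cite{BP} condition --- is precisely the intended point.
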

		\begin{ex}
			For $(N\geqslant 2)$, Let $B_{a_1}^{r_1},\ldots,B_{a_N}^{r_N}\in \ell^2(\mathbb{N})$ are unilateral weighted backward shifts, where $a_l=(a_{l,i})_{i=1}^{\infty}=l+1,r_l=l\;(1\leqslant l\leqslant N)$, then they are disjoint Ces$\grave{a}$ro-hypercyclic unilateral weighted backward shifts.
			
		\end{ex}
		
		\begin{proof}
		If $1\leqslant l\leqslant N$, then
		$$	\lim_{n\rightarrow\infty}\Big|\frac{1}{n}\textstyle\prod_{i=j+1}^{j+r_ln}a_{l,i}\Big|=\lim\limits_{n\rightarrow\infty}\Bigg|\displaystyle{\frac{(l+1)^{r_ln}}{n}}\Bigg|=\infty\;(1\leqslant l\leqslant N),$$ 
		and if $1\leqslant s<l\leqslant N$, then
		$$\lim_{n\rightarrow\infty}\Bigg|\frac{\prod_{i=j+(r_l-r_s)n+1}^{j+r_ln}a_{s,i}}{\prod_{i=j+1}^{j+r_ln}a_{l,i}}\Bigg|=\lim_{n\rightarrow\infty}\Bigg|\frac{(s+1)^{r_sn}}{(l+1)^{r_ln}}\Bigg|=0.$$
		Hence, by theorem \ref{4.1}, $B_{a_1}^{r_1},\ldots,B_{a_N}^{r_N}$ are disjoint Ces$\grave{a}$ro-hypercyclic.
			\end{proof}
			
			Let $B_a:X\rightarrow X$ be the bilateral weighted shift, $(e_k)_k$ be a basis in $X$ and $a=(a_k)_k$ be a bounded weight sequence
			$$B_ae_k=a_ke_{k-1}\;(k\in\mathbb{Z}).$$
			
	Salas \cite{SHN} described the characteristics of weight sequences of bilateral weighted shifts on $\ell^2(\mathbb{Z})$. Bès and Peris \cite{BP} described the characteristics of weight sequences of disjoint hypercyclic bilateral weighted shifts on $c_0(\mathbb{Z})$ or $\ell^p(\mathbb{Z})\;(1\leqslant p<\infty)$. León-Saavedra \cite{LS} described the characteristics of weight sequences of Ces$\grave{a}$ro-hypercyclic bilateral weighted shifts on $\ell^2(\mathbb{Z})$.	
			
	Next, we describe the characteristics of weight sequence of disjoint Ces$\grave{a}$ro-hypercyclic bilateral weighted shifts.		
		\begin{thm}
			Let $X=c_0(\mathbb{Z})$ or $\ell^2(\mathbb{Z})$ over the complex scalar field $\mathbb{C}$ and integers $1\leqslant r_1< r_2<\cdots < r_N$ be given. For each $1\leqslant l\leqslant N$, let $a_l=(a_{l,n})_{n\in\mathbb{Z}}$ be a bounded bilateral weight sequence and $B_{a_l}:X\rightarrow X$ be the corresponding bilateral weighted shift $B_{a_l}e_k=a_{l,k}e_{k-1}(k\in\mathbb{Z})$.\\
			Then the following are equivalent:\\
			(a) The set of $d-CH(B_{a_1}^{r_1},\ldots,B_{a_N}^{r_N})$ is a dense set.\\
			(b) For each $\epsilon>0$ and $q\in\mathbb{N}$, there exists $m\in\mathbb{N}$ satisfying: for each $| j|\leqslant q$, we have:\\
			If $ 1\leqslant l\leqslant N$,
			\begin{equation*}
				\begin{cases}
					\Big|\frac{\prod_{i=j+1}^{j+r_lm}a_{l,i}}{m}\Big|>\frac{1}{\epsilon},\\
					\Big|\frac{\prod_{i=j-r_lm+1}^{j}a_{l,i}}{m}\Big|<\epsilon.
				\end{cases}
			\end{equation*}
			If $ 1\leqslant s<l\leqslant N$,
			\begin{equation*}
				\begin{cases}
					\Big|\prod_{i=j+1}^{j+r_lm}a_{l,i}\Big|>\frac{1}{\epsilon}\Big|\prod_{i=j+(r_l-r_s)m+1}^{j+r_lm}a_{s,i}\Big|,\\
					\Big|\prod_{i=j+(r_s-r_l)m+1}^{j+r_sm}a_{l,i}\Big|<\epsilon\Big|\prod_{i=j+1}^{j+r_sm}a_{s,i}\Big|.
				\end{cases}
			\end{equation*}		
	(c)$B_{a_1}^{r_1},\ldots,B_{a_N}^{r_N}$ satisfy the d-Ces$\grave{a}$ro-Hypercyclicity Criterion.
		\end{thm}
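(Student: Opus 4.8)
The plan is to establish the cycle $(a)\Rightarrow(b)\Rightarrow(c)\Rightarrow(a)$, following the blueprint of Theorem \ref{4.1} but now carrying along, in addition to the ``blow-up'' inequalities, the new ``collapse'' inequalities that are forced by the two-sided index set $\mathbb{Z}$. Throughout I would rely on the single bookkeeping identity $\big(B_{a_l}^{r_lm}x\big)_{j}=\big(\prod_{i=j+1}^{j+r_lm}a_{l,i}\big)x_{j+r_lm}$, so that the $j$-th coordinate of $\frac1m B_{a_l}^{r_lm}x$ simply reads off a product of $r_lm$ weights times one coordinate of $x$.

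For $(a)\Rightarrow(b)$: given $\epsilon>0$ and $q\in\mathbb{N}$, fix $0<\delta<1$ with $\delta/(1-\delta)<\epsilon$. Here is the one place where the bilateral case genuinely differs from the unilateral one. I would use the \emph{density} of $d\text{-}CH(B_{a_1}^{r_1},\ldots,B_{a_N}^{r_N})$ to choose a d-Ces\`aro-hypercyclic vector $x$ with $\|x-\sum_{|i|\le q}e_i\|<\delta$; this single choice furnishes simultaneously the lower bounds $|x_j|>1-\delta$ for $|j|\le q$ and the tail bounds $|x_k|<\delta$ for $|k|>q$. Since the Ces\`aro orbit of $x$ is dense, it meets every nonempty ball infinitely often, so I may pick $m$ as large as needed with $\|\frac1m B_{a_l}^{r_lm}x-\sum_{|i|\le q}e_i\|<\delta$ for all $l$ at once. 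Reading the $j$-th coordinate ($|j|\le q$, target value $1$) and using $|x_{j+r_lm}|<\delta$ gives the forward bound $\frac1m|\prod_{i=j+1}^{j+r_lm}a_{l,i}|>\frac{1-\delta}{\delta}>\frac1\epsilon$; reading the far-negative coordinate $j-r_lm$ (target value $0$) and using $|x_j|>1-\delta$ gives the collapse bound $\frac1m|\prod_{i=j-r_lm+1}^{j}a_{l,i}|<\frac{\delta}{1-\delta}<\epsilon$. The two cross inequalities come out the same way by comparing the coordinate $j+r_lm$ (resp. $j+r_sm$) of the $l$-th and $s$-th orbits: the common factor $x_{j+r_lm}$ (resp. $x_{j+r_sm}$) cancels in the ratio, so no size control on that coordinate is needed, only that one of the two relevant target values is $1$ and the other $0$.

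For $(b)\Rightarrow(c)$: applying $(b)$ with $\epsilon=1/q$ I extract a strictly increasing sequence $(n_q)$, the witnesses necessarily tending to infinity as $\epsilon\to0$ because, for a bounded weight sequence, the blow-up inequality $\frac1m|\prod\cdots|>1/\epsilon$ cannot hold for bounded $m$. Taking $X_0=\cdots=X_N=\operatorname{span}\{e_k:k\in\mathbb{Z}\}$ (dense in $X$) and defining the exact right inverse $S_{l,q}e_k=\frac{n_q}{\prod_{i=k+1}^{k+r_ln_q}a_{l,i}}\,e_{k+r_ln_q}$, so that $\frac1{n_q}B_{a_l}^{r_ln_q}S_{l,q}=Id_{X_0}$ identically, the three clauses of the criterion follow by direct computation on basis vectors: clause (1) from the collapse bound, clause (2) from the blow-up bound, and clause (3) from the exactness on the diagonal together with the two cross inequalities off the diagonal (the case $i<l$ uses the second cross inequality, the case $i>l$ the first). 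Finally $(c)\Rightarrow(a)$ is immediate: by Theorem \ref{dchc} together with Proposition \ref{6.2} (or directly via Lemma \ref{thm3.9}), operators satisfying the criterion are densely d-Ces\`aro-hypercyclic, so $d\text{-}CH$ is dense.

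I expect the main obstacle to be exactly the derivation of the collapse inequalities in $(a)\Rightarrow(b)$: unlike the unilateral theorem, a mere d-Ces\`aro-hypercyclic vector does not suffice, since the coordinates $x_j$ with $|j|\le q$ could be arbitrarily small and would then yield no lower bound upon division. The remedy, and the reason statement $(a)$ is phrased in terms of a \emph{dense} set of d-Ces\`aro-hypercyclic vectors rather than a single one, is to select $x$ already close to $\sum_{|i|\le q}e_i$; verifying that this one choice simultaneously controls $x$ on $[-q,q]$ and on its complement, and that the far-index coordinates cancel cleanly in all the cross comparisons, is the technical heart of the argument.
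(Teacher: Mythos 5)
Your proposal is correct and follows essentially the same route as the paper's proof: in (a)$\Rightarrow$(b) you use density to pick a d-Ces\`aro-hypercyclic vector $\delta$-close to $\sum_{|i|\leqslant q}e_i$ and read off the four inequalities coordinatewise (with the same re-indexing $k=j\pm(r_l-r_s)m$, $|k|>q$, so the cross ratios reduce to orbit coordinates with targets $1$ and $0$), and in (b)$\Rightarrow$(c) you build the same exact right inverses $S_{l,q}$ on $\mathrm{span}\{e_k\}_{k\in\mathbb{Z}}$ and verify the three clauses of the criterion exactly as the paper does, closing the cycle via the earlier abstract results. The only differences are cosmetic: you justify the strict monotonicity of the witnesses $(n_q)$ and the final implication (c)$\Rightarrow$(a) explicitly, where the paper writes "obviously."
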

		
		\begin{proof}
			(a)$\Rightarrow$(b). Let $\epsilon >0$ and $q\in\mathbb{N}$ be given. Pick $0<\delta <1$ so that ${\delta}/(1-\delta)<\epsilon$. Let $x=(x_j)_{j\in\mathbb{Z}}$ be a d-Ces$\grave{a}$ro-hypercyclic vector satisfying
			$$\bigg\|x-\sum_{|j|\leqslant q}e_j\bigg\|<\delta.$$
			Let $m\in\mathbb{N}\;(m>2q)$ so that for $|k|\geqslant r_1m$,
			$$|x_k|<\delta,$$
			and
			$$\bigg\|\frac{1}{m}{B_{a_l}^{r_lm}}{x}-\sum_{|j|\leqslant q}e_j\bigg\|<\delta.$$
			Then we have 
			\begin{equation*}
				\begin{cases}
					|x_j-1|<\delta,\;&\text{if}\;|j|\leqslant q,\\
					|x_j|<\delta,\;&\text{if}\;|j|>q.
				\end{cases}
			\end{equation*}	
			And 
				\begin{equation*}
				\begin{cases}
					\bigg|\frac{1}{m}(\prod_{i=j+1}^{j+r_lm}a_{l,i})x_{j+r_lm}-1\bigg|<\delta,\quad&\text{if}\;|j|\leqslant q,\\
					\bigg|\frac{1}{m}(\prod_{i=j+1}^{j+r_lm}a_{l,i})x_{j+r_lm}\bigg|<\delta,\quad&\text{if}\;|j|>q.
						\end{cases}
				\end{equation*}	
		
			Let $|j|\leqslant q $ be fixed and for $1\leqslant l\leqslant N$, then we have
			$$\bigg|\frac{1}{m}\textstyle(\prod_{i=j+1}^{j+r_lm}a_{l,i})\bigg|>\displaystyle{\frac{1-\delta}{|x_{j+r_lm}|}}>\displaystyle{\frac{1-\delta}{\delta}}>\displaystyle{\frac{1}{\epsilon}},$$
				\begin{equation*}
					\begin{split}
			\bigg|\frac{1}{m}\textstyle(\prod_{i=j-r_lm+1}^{j}a_{l,i})\bigg| &=\bigg|\frac{1}{m}\textstyle(\prod_{i=k+1}^{k+r_lm}a_{l,i})\bigg| \\
			&=\bigg|\frac{\frac{1}{m}\prod_{i=k+1}^{k+r_lm}a_{l,i}x_{k+r_lm}}{x_{k+r_lm}}\bigg| \\
			&<\frac{\delta}{|x_{k+r_lm}|} \\
			&=\frac{\delta}{|x_j|}\\
			&<\frac{\delta}{1-\delta}\\
			&<\epsilon,
			\end{split}
				\end{equation*}
				where $k=j-r_lm$, $|k|>q$.
				
			Also, for $1\leqslant s<l\leqslant N$ and
			$|j|\leqslant q,$
			\begin{equation*}
			\begin{split}
			\bigg|\frac{(\prod_{i=j+1}^{j+r_lm}a_{l,i})}{(\prod_{i=j+(r_l-r_s)m+1}^{j+r_lm}a_{s,i})}\bigg|&=\bigg|\frac{\frac{1}{m}(\prod_{i=j+1}^{j+r_lm}a_{l,i})x_{j+r_lm}}{\frac{1}{m}(\prod_{i=j+(r_l-r_s)m+1}^{j+r_lm}a_{s,i})x_{j+r_lm}}\bigg|\\
			&=\bigg|\frac{\frac{1}{m}(\prod_{i=j+1}^{j+r_lm}a_{l,i})x_{j+{r_lm}}}{\frac{1}{m}(\prod_{i=k+1}^{k+r_sm}a_{s,i})x_{k+{r_sm}}}\bigg|\\
			&>\frac{1-\delta}{\delta}\\
			&>\frac{1}{\epsilon},
		\end{split}
			\end{equation*}
		where $k=j+(r_l-r_s)m$, $|k|>q$.
			\begin{equation*}
				\begin{split}
			\bigg|\frac{(\prod_{i=j+(r_s-r_l)m+1}^{j+r_sm}a_{l,i})}{(\prod_{i=j+1}^{j+r_sm}a_{s,i})}\bigg|&=\bigg|\frac{\frac{1}{m}(\prod_{i=j+(r_s-r_l)m+1}^{j+r_sm}a_{l,i})x_{j+r_sm}}{\frac{1}{m}(\prod_{i=j+1}^{j+r_sm}a_{s,i})x_{j+r_sm}}\bigg|\\
			&<\bigg|\frac{\frac{1}{m}(\prod_{i=j+(r_s-r_l)m+1}^{j+r_sm}a_{l,i})x_{j+r_sm}}{1-\delta}\bigg|\\
			&=\bigg|\frac{\frac{1}{m}(\prod_{i=k+1}^{k+r_lm}a_{l,i})x_{k+{r_lm}}}{1-\delta}\bigg|\\
			&<\frac{\delta}{1-\delta}\\
			&<\epsilon,
			\end{split}
		\end{equation*}
			where $k=j+(r_s-r_l)m$, $|k|>q$.
			
			(b)$\Rightarrow$(c). By(b), we may get integers $1\leqslant n_1<n_2<\cdots$ so that for $|j|\leqslant q$ we have:
			for each $1\leqslant l\leqslant N$,
			\begin{equation}\label{eq4.3}
				\begin{cases}
					\Big|\frac{1}{n_q}(\prod_{i=j+1}^{j+r_ln_q}a_{l,i})\Big|>q,\\
					\Big|\frac{1}{n_q}(\prod_{i=j-r_ln_q+1}^{j}a_{l,i})\Big|<\frac{1}{q},
				\end{cases}
			\end{equation}
			and for $1\leqslant s<l\leqslant N$,
			\begin{equation}\label{eq4.4}
				\begin{cases}
					\Big|(\prod_{i=j+1}^{j+r_ln_q}a_{l,i})\Big|>q\Big|(\prod_{i=j+(r_l-r_s)n_q+1}^{j+r_ln_q}a_{s,i})\Big|,\\
					\Big|(\prod_{i=j+(r_s-r_l)n_q+1}^{j+r_sn_q}a_{l,i})\Big|<\frac{1}{q}\Big|(\prod_{i=j+1}^{j+r_sn_q}a_{s,i})\Big|.
				\end{cases}
			\end{equation}	
			
			Now, let $X_0=\cdots=X_N=\text{span}\{e_k\}_{k\in\mathbb{Z}}$, then $X_0$ is dense in $X$. For $1\leqslant l\leqslant N$, 
			$$\frac{1}{n_q}B_{a_l}^{r_ln_q}e_k=\frac{a_{l,k}\ldots a_{l,k-r_ln_q+1}}{n_q}e_{k-r_ln_q},$$
			by (\ref{eq4.3}), we have
			  $\frac{1}{n_q}B_{a_l}^{r_ln_q}\xrightarrow{q\rightarrow \infty} 0 $ pointwise on $X_0\;(1\leqslant l\leqslant N)$. 
			  
			  For each $(1\leqslant l\leqslant N)$ and $q\in\mathbb{N}$, let the mapping $S_{l,q}                                                                                                                                                                                                                                                                                                                                                                                                                                                                                                                                                                                                                                                                                                                                                                                                                                                                                                                                                                                                                                                                                                                                                                                                                                                                                                                                                                                                                                                                                                                                                                                                                                                                                                                                                                                                                                                                                                                                                                                                                                                                                                                                                                                                                                                                                                                                                                                                                                                                                                                                                                                                                                                                                                                                                                                                                                                                                                                                                                                                                                                                                                                                                                                                                                                                                                                                                                                                                                                                                                                                                                                                                                                                                                                                                                                                                                                                                                                                                                                                                                                                                                                                                                                                                                                                                                                                                                                                                                                                                                                                                                                                                                                                                                                                                                                                                                                                                                                                                                                                                                                                                                                                                                                                                                                                                                                                                                                                                                                                                                                                                                                                                                                                                                                                                                                                                                                                                                                                                                                                                                                                                                                                                                                                                                                                                                                                                                                                                                                                                                                                                                                                                                                                                                                                                                                                                                                                                                                                                                                                                                                                                                                                                                                                                                                                                                                                                                                                                                                                                                                                                                                                                                                                                                                                                                                                                                                                                                                                                                                                                                                                                                                                                                                                                                                                                                                                                                                                                                                                                                                                                                                                                                                                                                                                                                                                                                                                                                                                                                                                                                                                                                                                                                                                                                                                                                                                                                                                                                                                                                                                                                                                                                                                                                                                                                                                                                                                                                                                                                                                                                                                                                                                                                                                                                                                                                                                                                                                                                                                                                                                                                                                                                                                                                                                                                                                                                                                                                                                                                                                                                                                                                                                                                                                                                                                                                                                                                                                                                                                                                                                                                                                                                                                                                                                                                                                                                                                                                                                                                                                                                                                                                                                                                                                                                                                                                                                                                                                                                                                                                                                                                                                                                                                                                                                                                                                                                                                                                                                                                                                                                                                                                                                                                                                                                                                                                                                                                                                                                                                                                                                                                                                                                                                                                                                                                                                                                                                                                                                                                                                                                                                                                                                                                                                                                                                                                                                                                                                                                                                                                                                                                                                                                                                                                                                                                                                                                                                                                                                                                                                                                                                                                                                                                                                                                                                                                                                                                                                                                                                                                                                                                                                                                                                                                                                                                                                                                                                                                                                                                                                                                                                                                                                                                                                                                                                                                                                                                                                                                                                                                                                                                                                                                                                                                                                                                                                                                                                                                                                                                                                                                                                                                                                                                                                                                                                                                                                                                                                                                                                                                                                                                                                                                                                                                                                                                                                                                                                                                                                                                                                                                                                                                                                                                                                                                                                                                                                                                                                                                                                                                                                                                                                                                                                                                                                                                                                                                                                                                                                                                                                                                                                                                                                                                                                                                                                                                                                                                                                                                                                                                                                                                                                                                                                                                                                                                                                                                                                                                                                                                                                                                                                                                                                                                                                                                                                                                                                                                                                                                                                                                                                                                                                                                                                                                                                                                                                                                                                                                                                                                                                                                                                                                                                                                                                                                                                                                                     :X_0\rightarrow X$ be the linear map given by
			$$S_{l,q}(e_k)=\frac{n_q}{a_{l,k+1}\ldots a_{l,k+r_ln_q}}e_{k+r_ln_q}\;(k\in\mathbb{Z}),$$
			so  $\frac{1}{n_q}B_{a_l}^{r_ln_q}S_{l,q}=Id_{X_0}$ and by (\ref{eq4.3}) $S_{l,q}\xrightarrow{q\rightarrow \infty} 0$ pointwise on $X_0\;(1\leqslant l\leqslant N)$. 
			
			Also, for $1\leqslant s< l\leqslant N$, we have
			$$\frac{1}{n_q}B_{a_l}^{r_ln_q}S_{s,q}e_k=\frac{\prod_{i=k+(r_s-r_l)n_q+1}^{k+r_sn_q}a_{l,i}}{\prod_{i=k+1}^{k+r_sn_q}a_{s,i}}e_{k+(r_s-r_l)n_q}\;(k\in\mathbb{Z}),$$
			then by (\ref{eq4.4}) $\frac{1}{n_q}B_{a_l}^{r_ln_q}S_{s,q}\xrightarrow{q\rightarrow\infty} 0$ pointwise on $X_0.$

			For $1\leqslant s<l\leqslant N$,
			$$\frac{1}{n_q}B_{a_s}^{r_sn_q}S_{l,q}e_k=\frac{\prod_{i=k+(r_l-r_s)n_q+1}^{k+r_ln_q}a_{s,i}}{\prod_{i=k+1}^{k+r_ln_q}a_{l,i}}e_{k+(r_l-r_s)n_q}\;(k\in\mathbb{Z}),$$
			then by (\ref{eq4.4}) $\frac{1}{nq}B_{a_s}^{r_sn_q}S_{l,q}\xrightarrow{q\rightarrow\infty}0$ pointwise on $X_0.$ So $B_{a_1}^{r_1},\ldots,B_{a_N}^{r_N}$ satisfy the d-Ces$\grave{a}$ro-Hypercyclicity Criterion.
			
			(c)$\Rightarrow$(a). Obviously.
		\end{proof}

\end{document}